\providecommand{\U}[1]{\protect\rule{.1in}{.1in}}
\newtheorem{theorem}{Theorem}
\newtheorem{acknowledgement}[theorem]{Acknowledgement}
\newtheorem{conjecture}[theorem]{Conjecture}
\newtheorem{corollary}[theorem]{Corollary}
\newtheorem{definition}[theorem]{Definition}
\newtheorem{example}[theorem]{Example}
\newtheorem{lemma}[theorem]{Lemma}
\newtheorem{proposition}[theorem]{Proposition}
\newtheorem{remark}[theorem]{Remark}
\newenvironment{proof}[1][Proof]{\noindent\textbf{#1.} }{\ \rule{0.5em}{0.5em}}
\begin{document}

\title{Weakly Chained Spaces}
\author{Conrad Plaut\\Department of Mathematics\\The University of Tennessee\\Knoxville TN 37996\\cplaut@utk.edu}
\maketitle

\begin{abstract}
We introduce \textquotedblleft weakly chained spaces\textquotedblright, which
need not be locally connected or path connected, but for which one has a
reasonable notion of generalized fundamental group and associated generalized
universal cover. We show that in the compact metric case, weakly chained is
equivalent to the concept of \textquotedblleft pointed
1-movable\textquotedblright\ from classical shape theory. We use this fact and
a theorem of Geoghegan-Swenson to give criteria on the metric spheres in a
CAT(0) space that imply that the boundary is has semistable fundamental group
at infinity.

\end{abstract}

\section{Introduction}

In this paper we introduce \textit{weakly chained spaces}, a very large class
of spaces with examples from so-called \textquotedblleft wild
topology\textquotedblright\ but also arising as boundaries of CAT(0) spaces
that satisfy certain \textquotedblleft infinitesimal\textquotedblright%
\ assumptions (\cite{PCAT}). In fact, in the compact metric case, we use a
result of Brodskiy, Dydak, Labuz and Mitra (\cite{Detal}) to prove that weakly
chained is equivalent to the notion of \textquotedblleft pointed
1-movable\textquotedblright\ from classical shape theory (Remark
\ref{pointed}). On the other hand, Geoghegan-Swenson (\cite{G}, Theorem 3.1)
showed that a $1$-ended proper CAT(0) space has semistable fundamental group
at infinity if and only if the boundary is pointed $1$-movable. It is a
long-standing conjecture of Mihalik (\cite{MSSI}) that every finitely
presented group $G$ is semi-stable at infinity, which is still open even for
CAT(0) groups. In light of the present paper and \cite{G}, one may check this
by verifying that the boundary of a CAT(0) space on which $G$ acts properly
and cocompactly by isometries is weakly chained. Theorem \ref{CAT} below
provides a method to verify this.

Because we are using theorems of others, we do not need to provide the
definition of \textquotedblleft pointed 1-movable\textquotedblright%
\ (originally due to Borsuk but mostly used in an equivalent form developed by
Marde\v{s}i\'{c}-Segal (\cite{MS})). We do note that it presupposes an
embedding in an ANR (e.g. the Hilbert Cube) and requires some work to verify,
for example, that the property does not depend on the ANR in question. In
contrast, for metric spaces \textquotedblleft weakly chained\textquotedblright%
\ can be defined in a single paragraph using using only the definition of
\textquotedblleft metric space\textquotedblright--as we will now do.

An $\varepsilon$-chain in a metric space is a finite sequence $\alpha
=\{x_{0},...,x_{n}\}$ of points such that for all $i$, $d(x_{i},x_{i+1}%
)<\varepsilon$. An $\varepsilon$-homotopy between $\alpha$ and another
$\varepsilon$-chain $\beta$ is a finite sequence of $\varepsilon$-chains
$\eta=\{\alpha=\alpha_{0},...,\alpha_{m}=\beta\}$ all with the same endpoints
such that each $\alpha_{i}$ differs from $\alpha_{i+1}$ by adding or removing
a single point (excluding endpoints). The $\varepsilon$-homotopy equivalence
class of $\alpha$ is denoted by $[\alpha]_{\varepsilon}$. A metric space $X$
is called \textit{chain connected} if for every $x,y\in X$, $x$ and $y$ may be
joined by an $\varepsilon$-chain for all $\varepsilon>0$. For simplicity we
will replace phrases like the last one by \textquotedblleft$x$ and $y$ may be
joined by arbitrarily fine chains\textquotedblright. A metric space $X$ is
called \textit{weakly chained} if it is chain connected and for every
$\varepsilon>0$ there is a $\delta>0$ such that if $d(x,y)<\delta$ then $x,y$
may be joined by arbitrarily fine chains $\alpha$ such that $[\alpha
]_{\varepsilon}=[x,y]_{\varepsilon}$; that is, $\alpha$ is $\varepsilon
$-homotopic to the two-point chain consisting of its endpoints.

A great variety of metric spaces are weakly chained--in fact the main examples
of compacta that are not weakly chained are solenoids (Example \ref{solenoid}%
). Among compact weakly chained spaces are all \textquotedblleft
sink-free\textquotedblright\ metric spaces (Proposition \ref{LSF}), defined as follows:

\begin{definition}
\label{sf}Let $X$ be a metric space. A (distance) sink in $X\times X$ is a
pair $(x,y)$ not on the diagonal at which the distance function has a
(possibly not strict) local minimum. If $X$ is chain connected and every sink
$(x,y)$ in $X$ satisfies $d(x,y)\geq\sigma$ for some $\sigma>0$ then $X$ is
called locally sink-free for $\sigma$, or LSF($\sigma$). When $\sigma$ is not
specified we simply call $X$ locally sink-free. If $X$ is LSF($\infty$) then
we say $X$ is sink-free.
\end{definition}

Put another way, if $x\neq y$ then $(x,y)$ is not a sink if and only if there
exist points $x^{\prime},y^{\prime}$ arbitrarily close to $x,y$ such that
$d(x^{\prime},y^{\prime})<d(x,y)$. That is, it is always possible to move the
points closer to one another with arbitrarily small movement. Examples of
sink-free metrics include all length, hence geodesic metrics (Example
\ref{length}); the Topologist's Sine Curve (Example \ref{tsc}); Euclidean
cones and spherical suspensions of arbitrary metric spaces (this follows
easily from an understanding of the geodesics in these spaces, see for example
\cite{BH}); and (obviously) any dense subspace of a sink-free metric space,
which means that sink-free metric spaces can be totally disconnected.

\begin{conjecture}
Every compact, weakly chained metric space admits a sink-free metric.
\end{conjecture}

Since geodesic metrics are sink-free, this conjecture can be considered as a
weak extension (pun unavoidable) of a question posed by Menger in 1928
(\cite{Men}), namely (in modern terminology) does every Peano Continuum admit
a geodesic metric? Menger's question was answered positively by the Bing-Moise
Theorem (\cite{B}, \cite{Moi}) in 1949. Note that it is easy to check that any
LSF($\sigma$) metric admits a sink-free metric that agrees with it when
distances are less than $\sigma$: just define the new distance between $x$ and
$y$ to be the infimum of lengths of $\sigma$-chains joining them.

Returning to CAT(0) spaces, see \cite{BH} for the standard terminology used in
the next theorem. For fixed $x_{0}\in X$, let $\Sigma_{x_{0}}(r):=\{y\in
X:d(x,y)=r\}$ denote the metric $r$-sphere at $x_{0}$.

\begin{theorem}
\label{CAT}Let $X$ be a proper, geodesically complete CAT(0) space and
$x_{0}\in X$. Suppose there exist some $K>0$ and a positive real function
$\iota$, called the refining increment, such that for all sufficiently large
$t$,

\begin{enumerate}
\item $\underset{s\rightarrow t^{+}}{\lim}\iota(s)>0$ (in particular if
$\iota$ is lower semicontinuous from the right) and

\item if $d(x,y)<\iota(t)$ and $(x,y)$ is a sink in $\Sigma_{x_{0}}(t)$ then
$x,y$ may be joined by a curve in $X_{t}\cap B(x,K)\cap B(y,K)$. 
\end{enumerate}

Then $\partial X$ is weakly chained, hence $X$ is semi-stable at infinity.
\end{theorem}

In \cite{PCAT} we give \textquotedblleft infinitesimal\textquotedblright\ and
relatively easy to verify conditions under which Theorem \ref{CAT} can be
applied. One convenience of Theorem \ref{basis} is that discrete homotopy
theory has been pushed into the background and is not involved in the hypotheses.

Even if one is only interested in metric spaces, the most appropriate setting
for these ideas is uniform spaces--especially if there is no natural metric,
as is true with some inverse limits. We will provide more details later, but
briefly a uniform space $X$ is a topological space together with a collection
of symmetric subsets of $X\times X$ containing an open subset containing the
diagonal called \textit{entourages}, which\textit{ }satisfy the following
\textquotedblleft triangle inequality\textquotedblright\ property: For every
entourage $E$ there is an entourage $F$ such that $F^{2}\subset E$. Here
$F^{2}$ is the set of all $(x,y)$ such that for some $w$, $(x,w),(w,y)\in F$.
A collection $\mathcal{B}$ of entourages such that every entourage contains an
element of $\mathcal{B}$ is called a \textit{uniformity basis} or simply
\textit{basis}. The main examples of uniform spaces are topological groups,
metric spaces (with a basis consisting of \textit{metric entourages}
$E_{\varepsilon}=\{(x,y):d(x,y)<\varepsilon\}$), and compact topological
spaces, which have a unique uniform structure with basis consisting of all
open subsets of $X\times X$ containing the diagonal. We always assume that
uniform spaces are Hausdorff, which is equivalent to the intersection of all
entourages being the diagonal of $X\times X$. One may now imitate the above
metric definitions using the uniform structure. For example, and $E$-chain is
a finite sequence $\alpha=\{x_{0},...,x_{n}\}$ of points such that for all
$i$, $(x_{i},x_{i+1})\in E$. In particular, an $\varepsilon$-chain is just an
$E_{\varepsilon}$-chain, and \textquotedblleft$x$ and $y$ may be joined by
arbitrarily fine chains\textquotedblright\ means that for every entourage $E$
there is an $E$-chain joining $x$ and $y$.

In 2001, Berestovskii-Plaut produced two ways to construct covering groups for
topological groups (\cite{BPTG}). The first method employed a construction
discovered independently by Schreier in 1925 (\cite{Sch}) and Mal'tsev in 1941
(\cite{Malcev}) to embed a local group into a uniquely determined topological
group. The Schreier-Mal'tsev construction is similar to the construction of a
finitely presented group: begin with the semigroup of finite words with
letters in the given local group, then mod out by an equivalence relation
determined by products that are defined in the local group. Berestovskii-Plaut
showed that if this construction is applied to a neighborhood of the identity
of a given topological group, one obtains a covering group of the original
group. Under appropriate circumstances, the inverse limit of these covering
groups is a kind of generalized universal cover. In the same paper,
Berestovskii-Plaut produced an equivalent method to produce covering groups
using discrete chains and homotopies, which does not need any underlying local
group structure and could be applied much more generally. The required notion
of generalized regular covering map for uniform spaces was developed by the
author in 2006 (\cite{Pcov}) and discrete homotopy theory for uniform spaces
was developed in \cite{BPUU}. Our second application in the present paper is
to extend the generalized universal covering space results of
\textit{\cite{BPUU} }not only to weakly chained uniform spaces, but also to
certain metrizable topological spaces, including all path connected metrizable
topological spaces.

The most basic construction in discrete homotopy theory for uniform spaces
goes as follows. Fixing a basepoint $\ast$, the set of all equivalence classes
$[\alpha]_{E}$ of $E$-chains starting at $\ast$ is denoted by $X_{E}$; the
endpoint mapping is denoted $\phi_{E}:X_{E}\rightarrow X$, which is surjective
if and only if every pair of points in $X$ may be joined by an $E$-chain; this
is obviously true if $X$ is chain connected. $X_{E}$ has a uniquely determined
uniform structure such that if $X$ is chain connected then $\phi_{E}$ is a
uniformly continuous regular covering map, although it is a very important
consideration that $X_{E}$ may not be chain connected. The group of covering
transformations of $\phi_{E}$ is naturally isomorphic to the group $\pi
_{E}(X)$ consisting of the $E$-homotopy equivalence classes of $E$-loops with
operation induced by concatenation. That is, $\pi_{E}(X)$ can be considered as
a kind of fundamental group \textquotedblleft at the scale of $E$%
\textquotedblright.

The covering maps $\phi_{E}:X_{E}\rightarrow X$ already have applications in
Riemannian geometry, including finiteness theorems (\cite{PW1}) and an
alternative approach to the Covering Spectrum of Sormani-Wei (\cite{SW2}),
which itself has numerous applications in geometric analysis. It is an open
question how to algebraically characterize these \textquotedblleft
entourage\textquotedblright\ covering maps (with some assumptions to ensure
connectedness of $X_{E}$), except for compact smooth manifolds of dimension
$d=1$ (the only non-trivial entourage cover is the universal cover) and
$d\geq3$ (entourage covers are precisely the regular covering maps
corresponding to the normal closures of finite sets in the fundamental group),
see \cite{PLS}.

When $F\subset E$ there is a natural mapping $\phi_{EF}:X_{F}\rightarrow
X_{E}$ defined by $\phi_{EF}([\alpha]_{F})=[\alpha]_{E}$. The restriction
$\theta_{EF}$ of $\phi_{EF}$ to $\pi_{F}(X)$ is a homomorphism into $\pi
_{E}(X)$, which is surjective or injective if and only if $\phi_{EF}$ is,
respectively. These mappings form an inverse system called the
\textit{fundamental inverse system} of $X$ and the inverse limit is denoted
$\widetilde{X}$. The maps $\theta_{EF}$ also form an inverse system of groups
with inverse limit denoted by $\pi_{U}(X)$ and called the \textit{uniform
fundamental group} of $X$. The construction of $\widetilde{X}$ can be carried
out for any uniform space, and the question becomes what are the properties of
$\widetilde{X}$, the natural projection $\phi:\widetilde{X}\rightarrow X$, and
$\pi_{U}(X)$? In analogy with the classical theory, and extending the notion
of \textquotedblleft universal space\textquotedblright\ from \cite{BPUU}, we
make the following definition.

\begin{definition}
If $X$ is a weakly chained uniform space such that $\pi_{U}(X)$ is trivial
then we will call $X$ \textit{uniformly simply connected}. If $f:Y\rightarrow
X$ is a generalized regular covering map and $Y$ is uniformly simply connected
then $f$ is called a uniform universal covering map (UU-cover) of $X$.
\end{definition}

\begin{theorem}
\label{main1}If $X$ is a metrizable weakly chained space then $\phi
:\widetilde{X}\rightarrow X$ is a UU-cover of $X$ with deck group $\pi_{U}(X)$.
\end{theorem}

For simplicity we will give only an intuitive definition of generalized
regular covering map as defined in \cite{Pcov} (perhaps confusingly just
called \textquotedblleft covers\textquotedblright\ in that paper): A
traditional regular covering map of a topological space may be roughly
described as \textquotedblleft the quotient map of a discrete action by a
group of homeomorphisms\textquotedblright. Analogously, a generalized regular
covering map of a uniform space may be roughly described as \textquotedblleft
the quotient map of a prodiscrete action by a group of uniform
homeomorphisms\textquotedblright. In either case, as is traditional in
geometry, we will call the group in question the \textquotedblleft deck
group\textquotedblright\ of the generalized regular covering map. We will use
the theorem from \cite{Pcov} that generalized regular covering maps are
precisely inverse limits of \textquotedblleft discrete\textquotedblright%
\ covering maps, the uniform analog of traditional regular covering maps.

Once the existence of the UU-cover is established, the following
\textquotedblleft classical\textquotedblright\ properties follow, which were
shown in \cite{BPUU} in the setting of \textquotedblleft coverable
spaces\textquotedblright. \textbf{Lifting:} If $Y$ is uniformly simply
connected, $X$ is weakly chained and $f:Y\rightarrow X$ is uniformly
continuous then there is a unique (up to basepoint) uniformly continuous map
$f_{L}:Y\rightarrow\widetilde{X}$ (called the lift of $f$) such that
$f=\phi\circ f_{L}$. \textbf{Universal: }If $g:Y\rightarrow X$ is a
generalized regular covering map between weakly chained spaces then $\phi$
factors through it, i.e. there is a unique (up to basepoint) generalized
regular covering map $h:\widetilde{X}\rightarrow Y$ such that $\phi=g\circ h$.
In particular, the UU-cover is unique up to uniform homeomorphism and choice
of basepoint. \textbf{Functorial: }If $f:Y\rightarrow X$ is uniformly
continuous between weakly chained spaces then there is a unique (up to
basepoint) uniformly continuous map $\widetilde{f}:\widetilde{Y}%
\rightarrow\widetilde{X}$ that commutes with $f$ and the respective UU-covers.
This map also induces a homormorphism $f_{\#}:\pi_{U}(X)\rightarrow\pi_{U}(Y)$
and satisfies $\widetilde{f\circ g}=\widetilde{f}\circ\widetilde{g}$.

To fully complement the classical theory, one may define a \textit{generalized
covering map} (\textit{sans }regular) between weakly chained spaces to be any
uniformly continuous surjection $f:X\rightarrow Y$ through which
$\phi:\widetilde{Y}\rightarrow Y$ factors, although we do not use this concept
in this paper.

Coverable,\ which is stronger than weakly chained (Corollary \ref{coverable}),
means that the projections $\phi^{E}:\widetilde{X}\rightarrow X_{E}$ are
surjective for all $E$ in some uniformity basis. Coverable is easy to verify
in come cases, for example for Peano continua, but in general it can be
problematic to verify because it involves the structure of the fundamental
inverse system. In fact inverse systems are one situation in which the
definition of weakly chained offers many advantages. The next example shows
that coverable is strictly stronger than weakly chained, but we do not know
whether these properties are equivalent for metrizable spaces.

\begin{example}
For locally compact topological groups with their unique invariant uniform
structures, Berestovskii-Plaut showed that coverable is equivalent to path
connected (Theorem 7, \cite{BPLGC}). On the other hand, the character group
$G$ of the discrete group $\mathbb{Z}^{\mathbb{N}}$ was shown by Dixmier in
1957 (\cite{Dx}) to be a compact (non-metrizable!), connected, locally
connected topological group that is not path connected. Therefore $G$ is not
coverable. But since $G$ is locally connected, as we will see below $G$ is
weakly chained. On the other hand, as shown in the proof of Theorem 9 in
\cite{BPLGC}, $\phi:\widetilde{G}\rightarrow G$ is not surjective and so
cannot be a UU-cover. This shows that \textquotedblleft
metrizable\textquotedblright\ cannot be removed from Theorem \ref{main1}.
Nonetheless, since $G$ is a compact, connected group, $G$ has a (compact!)
universal cover in yet another sense (\cite{BPCG}).
\end{example}

Every metrizable (more generally completely regular) topological space has a
unique finest uniform structure compatible with its topology, called the
\textit{fine uniformity. }This result springs out of some category theory in
\cite{I}, but we give a more concrete statement and proof for the metrizable
case in Proposition \ref{fine}. As is stated in \cite{I}, but as also
immediately follows from Proposition \ref{fine}, if topological spaces have
the fine uniformity, then maps between them are continuous if and only if they
are uniformly continuous. We call a topological space $X$ \textit{weakly
chained} if $X$ with the fine uniformity is weakly chained. We will use the
same terms (\textit{UU-cover} and \textit{uniform fundamental group}) to
describe, for a weakly chained topological space, the (now topological!)
invariants with the same names associated with the fine uniformity. 

In the same year that \cite{BPUU} appeared, Fischer and Zastrow (\cite{FZ})
also defined what they called a \textquotedblleft generalized universal
cover\textquotedblright\ for topological spaces, which we will distinguish
from ours by referring to it as the \textquotedblleft Fischer-Zastrow simply
connected cover\textquotedblright. They defined it to be a continuous function
$p:\widetilde{X_{FZ}}\rightarrow X$ where $\widetilde{X_{FZ}}$ is path
connected and simply connected, such that the following \textbf{Topological}
\textbf{Lifting} property holds: If $Y$ is a path connected, locally path
connected, simply connected space and $f:Y\rightarrow X$ is continuous then
there is a unique (up to basepoint) continuous lift $f_{L}:Y\rightarrow
\widetilde{X_{FZ}}$. They worked with more general topological spaces, but
showed that for metrizable spaces, if the classical homomorphism $\kappa
:\pi_{1}(X)\rightarrow\overset{\vee}{\pi}_{1}(X)$ from the fundamental group
into the shape group is injective (a property referred to as \textquotedblleft
shape injective\textquotedblright\ in \cite{CC}), then the Fischer-Zastrow
simply connected cover exists. At the same time, Berestovskii-Plaut showed in
\cite{BPUU}, Proposition 80, that if $X$ is path connected and a naturally
defined homomorphism $\lambda:\pi_{1}(X)\rightarrow\pi_{U}(X)$, is injective
then the UU-cover is simply connected! Moreover, in the compact metrizable
case, $\lambda$ is naturally identified with $\kappa$ (Remark \ref{pointed}).

Let $X$ be a metrizable, path connected topological space, and let $X^{LP}$
denote the locally path-connected co-reflection of $X$ (there is a nice
discussion of this concept, including otherwise unpublished results, in
\cite{Blog}). $X^{LP}$ has a finer topology than $X$, is metrizable if $X$ is
metrizable, and is path and locally path connected. If $X$ is already locally
path connected then $X$ is homeomorphic to $X^{LP}$. Moreover, if
$f:Y\rightarrow X$ is a continuous function with $Y$ path and locally path
connected, $f:Y\rightarrow X^{LP}$ is also continuous. By Proposition
\ref{loc2g}, when $X^{LP}$ is given the fine uniformity, it is coverable,
hence weakly chained. In addition, if $Y$ is path and locally path connected,
it follows from Proposition \ref{loc2g} and Theorem 72 in \cite{BPUU} that $Y$
with the fine uniformity is uniformly simply connected. Therefore, the
composition $\widetilde{X^{LP}}\rightarrow X^{LP}\rightarrow X$, which we will
also denote by $\phi$, has the Topological Lifting property, and any lift
$f_{L}$ must go into the path component of the basepoint of $\widetilde{X^{LP}%
}$. Therefore, if $\lambda$ is injective (so $\widetilde{X^{LP}}$ is simply
connected), the restriction of $\phi$ to the path component of
$\widetilde{X^{LP}}$ (which maps onto $X$ due to path lifting) is the
Fischer-Zastrow simply connected cover.

Conversely, suppose that $X$ is a metrizable, path connected topological space
with a Fischer-Zastrow simply connected cover. Then according to the
Topological Lifting property, there is a unique base-point preserving
uniformly continuous lift $\tau:\widetilde{X_{FZ}}\rightarrow\widetilde{X^{LP}%
}$, which maps into the path component of the basepoint of $\widetilde{X^{LP}%
}$, and which commutes with the two generalized universal covering maps. But
Topological Lifting property of the Fischer-Zastrow map produces an inverse
function to $\tau$. We obtain the following version of their theorem, with the
only remaining question being the relationship between $\kappa$ and $\lambda$
in the non-compact case (which we will leave for a shape theorist to work out).

\begin{theorem}
Let $X$ be a path connected metrizable topological space. If the map
$\lambda:\pi_{1}(X)\rightarrow\pi_{U}(X)$ is injective (e.g. if $X$ is compact
and shape injective) then the Fischer-Zastrow simply connected covering map of
$X$ exists and is naturally identified with the restriction of the UU-cover of
$X^{LP}$ to the path component of the identity.
\end{theorem}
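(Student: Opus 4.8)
The plan is to prove the two claims of the theorem separately: first to exhibit a concrete Fischer--Zastrow simply connected cover of $X$ (giving existence), and then to show that any Fischer--Zastrow cover must coincide with it (giving the natural identification). First I would assemble the objects. Since $X$ is path connected and metrizable, Corollary \ref{top} shows $X$ is weakly chained, and Theorem \ref{main1} then shows $X$ is coverable, so the UU-cover $\phi:\widetilde{X}\rightarrow X$ exists. Because $\lambda:\pi_{1}(X)\rightarrow\pi_{U}(X)$ is injective and $X$ is path connected, Proposition 80 of \cite{BPUU} guarantees that $\widetilde{X}$ is simply connected. Let $\widetilde{X}_{0}$ denote the path component of the basepoint $\widetilde{\ast}$ and set $\phi_{0}:=\phi|_{\widetilde{X}_{0}}$. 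By construction $\widetilde{X}_{0}$ is path connected, and since $\pi_{1}$ depends only on the path component of its basepoint, $\widetilde{X}_{0}$ inherits simple connectivity from $\widetilde{X}$; by Corollary \ref{universal} it is therefore universal.

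Next I would verify that $\phi_{0}$ satisfies the defining lifting property of a Fischer--Zastrow cover. Let $f:Z\rightarrow X$ be a basepoint-preserving map from a path connected, locally path connected, simply connected space $Z$. Equipping $Z$ with its fine uniformity makes $f$ uniformly continuous, and Corollary \ref{universal} makes $Z$ universal, so the lifting property of the UU-cover (Theorem 5 of \cite{BPUU}) yields a unique basepoint-preserving uniformly continuous lift $\widetilde{f}:Z\rightarrow\widetilde{X}$ with $f=\phi\circ\widetilde{f}$. Since $Z$ is path connected and $\widetilde{f}$ preserves the basepoint, the image of $\widetilde{f}$ lies in $\widetilde{X}_{0}$, so $\widetilde{f}$ is in fact a lift into $\widetilde{X}_{0}$; moreover any continuous lift into $\widetilde{X}_{0}$ is a continuous---hence uniformly continuous, by fineness of both uniformities---lift into $\widetilde{X}$, so uniqueness is inherited from Theorem 5. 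This shows that $\phi_{0}$ is a Fischer--Zastrow simply connected cover, establishing existence; and $\phi_{0}$ is by definition the restriction of the UU-cover to its path component.

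It remains to identify $\phi_{0}$ with an arbitrary Fischer--Zastrow cover $p:\widetilde{X_{FZ}}\rightarrow X$. Equipping $\widetilde{X_{FZ}}$ with its fine uniformity makes it universal (Corollary \ref{universal}) and $p$ uniformly continuous, so Theorem 5 of \cite{BPUU} produces a unique basepoint-preserving uniformly continuous lift $\tau:\widetilde{X_{FZ}}\rightarrow\widetilde{X}$ with $\phi\circ\tau=p$; as before its image lies in $\widetilde{X}_{0}$, so $\phi_{0}\circ\tau=p$. To invert $\tau$ I would produce a basepoint-preserving map $\sigma:\widetilde{X}_{0}\rightarrow\widetilde{X_{FZ}}$ lifting $\phi_{0}$ through $p$, and then argue that $\sigma\circ\tau$ and $\tau\circ\sigma$ are basepoint-preserving self-lifts of $p$ and of $\phi_{0}$ respectively, hence the relevant identity maps. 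This would make $\tau$ a homeomorphism commuting with the two covering maps, which is precisely the asserted natural identification.

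The main obstacle is the construction and uniqueness of the inverse $\sigma$: the Fischer--Zastrow map-level lifting and uniqueness are only guaranteed for \emph{locally} path connected sources, whereas neither the path component $\widetilde{X}_{0}$ of the UU-cover nor, a priori, $\widetilde{X_{FZ}}$ is known to be locally path connected, so one cannot simply invoke the abstract lifting property with these as source. I expect to circumvent this using unique path lifting: both generalized covers lift paths uniquely, so $\sigma$ can be defined pointwise by lifting $\phi_{0}$ along paths from the basepoint, with well-definedness coming from simple connectivity of $\widetilde{X}_{0}$ together with uniqueness of path lifts in $\widetilde{X_{FZ}}$, and the two composite identities then checked on path endpoints rather than via the map-level uniqueness clause. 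Establishing unique path lifting for $\widetilde{X}_{0}$ within the Berestovskii--Plaut construction, verifying continuity of the pointwise-defined $\sigma$, and deducing that $\tau$ is consequently surjective onto $\widetilde{X}_{0}$, is the delicate step where the specific structure of the two generalized covers must be compared.
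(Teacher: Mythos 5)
Your proposal follows essentially the same route as the paper: existence via Corollary \ref{top} and Theorem \ref{weakcov} together with Proposition 80 of \cite{BPUU} (simple connectivity of $\widetilde{X}$), and the identification via the lift $\tau:\widetilde{X_{FZ}}\rightarrow\widetilde{X}$ supplied by the UU-cover's lifting property, inverted using the lifting properties of the Fischer--Zastrow map. The paper's own argument (given only as a sketch in the Introduction) is no more detailed at the one point you flag as delicate---it simply asserts that ``the lifting properties of the Fischer--Zastrow map produce an inverse function to $\tau$''---so your explicit plan to build the inverse $\sigma$ by unique path lifting is, if anything, more careful than the original.
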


From Theorem 79 in \cite{BPUU} we know that $\widetilde{X^{LP}}$ is path
connected if and only if the map $\lambda$ is surjective. In other words, if
$\lambda$ is an isomorphism then the Fischer-Zastrow simply connected cover
must be the UU-cover of $X^{LP}$. If $\lambda$ is not surjective, then in
choosing between these two generalized universal covers, one must decide
whether path connectedness or the regular structure and completeness of the
UU-cover is more appropriate for the given problem. The Hawaiian Earring $H$
is an example of a Peano continuum such that the map $\lambda$ is not
surjective (this is known to topologists for the map $\kappa$, but specific
calculations involving $\lambda$ for $H$ and other Peano continuua are given
in Section 7 of \cite{BPUU}). In general if $X$ is any Peano continuum then by
uniqueness, the UU-cover can be obtained by taking the completion as a uniform
space (\cite{I}) of the fine uniformity on the Fischer-Zastrow simply
connected cover, because the path component of $\widetilde{X}$ is dense
(\cite{BPUU}, Proposition 82).

There is another connection between these two constructions related to an
earlier construction of Sormani-Wei from 2001 (\cite{SW1}, \cite{SW2}) for
metric spaces. Fischer-Zastrow in their existence proof use a construction of
Spanier (\cite{S}) involving existence of covering maps that are determined by
open coverings of a space. Sormani-Wei used the same construction of Spanier
applied to the covering of a geodesic space by $\delta$-balls. Specifically,
call a path loop \textquotedblleft$\delta$-small\textquotedblright\ if it is
of the form $\alpha\ast\tau\ast\overline{\alpha}$, where the loop $\tau$ is
contained in an $\delta$-ball. Spanier's construction then provides a covering
map determined by the subgroup of the fundamental group generated by $\delta
$-small loops, which they called the $\delta$-cover of a metric space. Despite
the completely different construction, Plaut-Wilkins showed that for compact
geodesic spaces, the $\varepsilon$-cover $X_{\varepsilon}:=X_{E_{\varepsilon}%
}$ is equivalent to the Sormani-Wei $\delta$-cover when $\delta=\frac{3}%
{2}\varepsilon$ (\cite{PW2}). Sormani-Wei did not consider the inverse limit
of their $\delta$-covers (there are many applications of this construction to
Riemannian geometry without doing so), but the inverse limit, of course, must
be $\widetilde{X}$.

Since $\delta$-small path loops are trivially $\delta$-null in the sense of
\cite{PLS}, they lift as loops to $X_{\delta}$. It follows that if a path loop
$\lambda$ is homotopic to arbitrarily small loops, then it lifts as a loop to
every $X_{\delta}$ and hence as a loop to $\widetilde{X}$. In the compact
case, the contrapositive statement gives in an alternative proof that shape
injective spaces are homotopically Hausdorff in the sense of Cannon-Conner
(\cite{CC}). The converse is not true. Virk-Zastrow (\cite{VZ}) constructed a
path and locally path connected space $RX$ that is homotopically Hausdorff but
for which the Fischer-Zastrow construction does not produce a generalized
universal cover because uniqueness of path liftings fails. In other words,
from the above discussion, the space is not shape injective. But $RX$ is a
Peano continuum and hence coverable. Therefore the UU-covering space
$\widetilde{RX}$ nonetheless exists and is uniformly simply connected (but not
simply connected! cf. Proposition 80 in \cite{BPUU}). --And of course the
UU-cover has the Lifting, Universal, and Functorial properties described earlier.

\section{Background}

We first recall a few concepts about uniform spaces, sometimes with notation
not used by classical authors, among whom notation varies somewhat. To help
with the exposition and our notation, we will give a couple of proofs of very
basic concepts but claim no originality for those results. Basic statements
that we do not prove can be found in standard texts such as \cite{I}. Some
quick reminders: A uniform space has a compatible metric if and only if it has
a countable basis. The subspace uniformity on $A\subset X$ consists of the
intersections of entourages in $X$ with $A\times A$. What we call
\textquotedblleft chain connected\textquotedblright\ is equivalent to what is
known as \textquotedblleft uniformly connected\textquotedblright\ in the
classical literature and it is a basic result that for compact spaces,
connected and chain connected are equivalent. Like components, chain
components are closed but need not be open.

For a topological space, being \textquotedblleft
uniformizable\textquotedblright\ (having a uniform structure compatible with a
given topology) is equivalent to being completely regular. We do not know of a
direct proof in the literature of the next result:

\begin{proposition}
\label{fine}Let $X$ be a metrizable topological space. The collection of all
symmetric sets containing open sets containing the diagonal in $X\times X$ is
a uniform structure compatible with the topology that contains every uniform
structure compatible with the topology.
\end{proposition}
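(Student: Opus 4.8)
The plan is to fix a compatible metric $d$ on $X$ and write $\mathcal{U}$ for the proposed collection: all symmetric $E\subseteq X\times X$ that contain an open set $O$ with $\Delta\subseteq O$, where $\Delta=\{(x,x):x\in X\}$. Three things must be checked: that $\mathcal{U}$ is a uniform structure (for which the only substantive axiom is the ``triangle inequality'' $F^{2}\subseteq E$, since closure under finite intersections is immediate---$E_{1}\cap E_{2}$ is symmetric and contains the open set $O_{1}\cap O_{2}\supseteq\Delta$), that the induced topology is the metric topology, and that $\mathcal{U}$ contains every compatible uniform structure. I expect the triangle inequality to be the only real obstacle; the other two parts are essentially formal.

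For the triangle inequality, fix $E\in\mathcal{U}$ and an open $O$ with $\Delta\subseteq O\subseteq E$. The idea is to build a single open symmetric neighborhood $W$ of $\Delta$ whose ``radius'' shrinks as one approaches the bad set $X\times X\smallsetminus O$, but slowly enough to remain Lipschitz. Define
\[
R(x)=\sup\{r>0:B(x,r)\times B(x,r)\subseteq O\},
\]
which is positive for every $x$ because $O$ is an open neighborhood of $(x,x)$. First I would show $R$ is $2$-Lipschitz (allowing the value $+\infty$): if $B(x,r)\times B(x,r)\subseteq O$ and $d(x,x')<t$, then $B(x',r-2t)\subseteq B(x,r)$, so $R(x')\geq R(x)-2d(x,x')$. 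Capping and scaling, set $\rho(x)=\tfrac12\min(R(x),1)$, which is continuous, strictly positive, satisfies $\rho(x)<R(x)$, and hence $B(x,\rho(x))\times B(x,\rho(x))\subseteq O$. Now put
\[
W=\{(u,v):d(u,v)<\min(\rho(u),\rho(v))\}.
\]
This $W$ is symmetric, contains $\Delta$, and is open (being the strict-positivity set of the continuous function $(u,v)\mapsto\min(\rho(u),\rho(v))-d(u,v)$), so $W\in\mathcal{U}$. The key computation is $W^{2}\subseteq O$: if $(a,b)\in W$ and $(b,c)\in W$ then $d(a,b)<\min(\rho(a),\rho(b))\leq\rho(b)$ and $d(b,c)<\min(\rho(b),\rho(c))\leq\rho(b)$, so both $a$ and $c$ lie in $B(b,\rho(b))$, whence $(a,c)\in B(b,\rho(b))\times B(b,\rho(b))\subseteq O\subseteq E$. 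Thus $W^{2}\subseteq E$. It is exactly this step that uses metrizability---equivalently, the paracompactness behind it---and it is what can fail for a general completely regular space.

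For compatibility I would compare neighborhood systems at each point $x$. Every $E\in\mathcal{U}$ contains an open $O\supseteq\Delta$, and the slice $O[x]=\{y:(x,y)\in O\}$ is open in the metric topology and contains $x$, so $E[x]\supseteq O[x]$ is a metric neighborhood of $x$; thus uniform neighborhoods are metric neighborhoods. Conversely each metric entourage $E_{\varepsilon}=\{(p,q):d(p,q)<\varepsilon\}$ is itself open and symmetric, hence lies in $\mathcal{U}$, and $E_{\varepsilon}[x]=B(x,\varepsilon)$; so every metric neighborhood of $x$ contains some $E_{\varepsilon}[x]$ and is a uniform neighborhood. The two topologies therefore agree. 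Finally, for maximality, let $\mathcal{V}$ be any uniform structure compatible with the topology. By the very definition of a uniform space, each entourage of $\mathcal{V}$ is a symmetric set containing an open (in the topology induced by $\mathcal{V}$, which by compatibility is the metric topology) neighborhood of $\Delta$---which is precisely the membership criterion for $\mathcal{U}$. Hence $\mathcal{V}\subseteq\mathcal{U}$, completing the proof.
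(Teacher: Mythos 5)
Your proof is correct, and it follows the same overall strategy as the paper's: fix a compatible metric, and for the key ``triangle inequality'' axiom assign to each $x$ a radius (your $R(x)$, the paper's $\varepsilon_x$) with $B(x,r)\times B(x,r)\subseteq O$, then shrink these radii to manufacture a half-entourage. Where you genuinely differ is in how the shrinking is organized. The paper takes the union $F=\bigcup_x B(x,\tfrac{\varepsilon_x}{2})\times B(x,\tfrac{\varepsilon_x}{2})$ and verifies $F^2\subseteq E$ by comparing the two centers $x,y$ witnessing $(a,b)\in F$ and $(b,c)\in F$, which requires a ``without loss of generality'' on $\varepsilon_x$ versus $\varepsilon_y$ and a three-term triangle estimate (one that, as written there, really needs thirds rather than halves to close). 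Your $\min(\rho(u),\rho(v))$ device sidesteps that comparison entirely: for a two-step chain $a,b,c$ both outer points automatically lie in $B(b,\rho(b))$, so the containment $W^2\subseteq O$ is a one-line computation centered at the middle point. The price you pay is the Lipschitz regularization of $R$, which you correctly identify as necessary to make $W$ open; the paper avoids any continuity argument by taking a union of open rectangles. Your explicit verification of topological compatibility and of the remaining filter axioms is also sound and somewhat more complete than the paper's, which (like you) treats the maximality claim as immediate from the definition of entourage. Both arguments are valid; yours trades a small amount of extra machinery for a cleaner and more robust verification of the crucial inclusion.
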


\begin{proof}
Let $X$ have any metric. Let $E$ be a symmetric open set containing the
diagonal in $X\times X$. For each $x\in X$ there is some $\varepsilon_{x}>0$
such that $B(x,\varepsilon_{x})\times B(x,\varepsilon_{x})\subset E$. Define
$F:=%
{\displaystyle\bigcup\limits_{x\in X}}
\left[  B(x,\frac{\varepsilon_{x}}{2})\times B(x,\frac{\varepsilon_{x}}%
{2})\right]  $. $F$ is clearly a symmetric open set containing the diagonal,
and we claim that $F^{2}\subset E$. If $(a,c)\in F^{2}$, this by definition
means that there is some $b\in X$ such that $(a,b)\in F$ and $(b,c)\in F$.
That is, there exist $x,y\in X$ such that $d(a,x),d(b,x)<\frac{\varepsilon
_{x}}{2}$ and $d(b,y),d(c,y)<\frac{\varepsilon_{y}}{2}$. Without loss of
generality, $\varepsilon_{x}\geq\varepsilon_{y}$. By the triangle inequality,
$d(c,x)<\frac{\varepsilon_{x}}{2}+\frac{\varepsilon_{y}}{2}\leq\varepsilon
_{x}$. Since $d(a,x)<\frac{\varepsilon_{x}}{2}<\varepsilon_{x}$, $(a,c)\in
B(x,\varepsilon_{x})\times B(x,\varepsilon_{x})\subset E$. The last statement
of the proposition is obvious, since entourages are by definition symmetric
sets containing open sets containing the diagonal.
\end{proof}

We define the $E$\textit{-ball} at $x$ to be $B(x,E):=\{y:(x,y)\in E\}$; in
metric spaces, $B(x,\varepsilon)=B(x,E_{\varepsilon})$. A subset $U$ of a
uniform space is called \textit{uniformly }$F$-\textit{open} for an entourage
$F$ if for any $x\in U$, $B(x,F)\subset U$. If $F$ is unspecified we will
simply call $U$ \textit{uniformly open}. It is easy to check that uniformly
open sets are both open and closed; in fact, if $U$ is uniformly $F$-open then
the complement of $U$ is clearly also uniformly $F$-open. The following lemma
is useful to understand these concepts.

\begin{lemma}
\label{10}Let $X$ be a uniform space, $E$ be an entourage, and $x\in X$. The
set
\[
U_{x}^{E}:=\{y\in X:\text{there is an }E\text{-chain from }x\text{ to }y\}
\]
is the smallest uniformly $E$-open set in $X$ containing $x$.
\end{lemma}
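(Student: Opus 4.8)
The plan is to show that $U_x^E$ is uniformly $E$-open, contains $x$, and is contained in every uniformly $E$-open set containing $x$. These three facts together establish that it is the smallest such set. First I would verify containment: $x \in U_x^E$ because the trivial one-point chain $\{x\}$ is an $E$-chain from $x$ to $x$ (or, if one insists on at least one step, because $(x,x)$ lies in the diagonal, which every entourage contains).

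Next I would check that $U_x^E$ is uniformly $E$-open. Take any $y \in U_x^E$; by definition there is an $E$-chain $\{x = x_0, \dots, x_n = y\}$. I want to show $B(y,E) \subset U_x^E$. So let $z \in B(y,E)$, meaning $(y,z) \in E$. Then appending $z$ to the existing chain yields $\{x_0, \dots, x_n = y, z\}$, and since $(x_i,x_{i+1}) \in E$ for all the original steps and $(y,z) \in E$, this is an $E$-chain from $x$ to $z$. Hence $z \in U_x^E$, which proves $B(y,E) \subset U_x^E$ and therefore that $U_x^E$ is uniformly $E$-open.

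The minimality step is the heart of the argument, though it is also short. Suppose $U$ is any uniformly $E$-open set with $x \in U$; I must show $U_x^E \subset U$. Let $y \in U_x^E$ with witnessing $E$-chain $\{x = x_0, \dots, x_n = y\}$. The idea is a straightforward induction on the chain length: $x_0 = x \in U$ by hypothesis, and if $x_i \in U$, then since $(x_i, x_{i+1}) \in E$ we have $x_{i+1} \in B(x_i, E) \subset U$ by uniform $E$-openness of $U$. Thus $x_n = y \in U$. Since $y$ was arbitrary, $U_x^E \subset U$.

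The only thing requiring care is a consistent reading of the definitions — in particular whether a single point counts as an $E$-chain, and the precise direction of the $E$-ball inclusion used in uniform openness — but these are bookkeeping matters rather than genuine obstacles; there is no hard step here. I would present the three parts in the order above (membership, uniform openness, minimality), since minimality logically depends only on the defining property of uniform $E$-openness and not on the first two parts, but stating them together gives the cleanest exposition.
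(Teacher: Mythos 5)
Your proof is correct and follows essentially the same route as the paper: appending a point to a witnessing chain gives uniform $E$-openness, and minimality comes from walking along the chain inside any uniformly $E$-open set (your induction is just the contrapositive of the paper's ``first point that leaves $V$'' contradiction).
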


\begin{proof}
Suppose $y\in U_{x}^{E}$, i.e. there is an $E$-chain $\alpha=\{x=x_{0}%
,...,x_{n}=y\}$. Let $z\in B(y,E)$. Since $(y,z)\in E$, $\{x=x_{0}%
,...,x_{n}=y,z\}$ is an $E$-chain, showing $z\in U_{x}^{E}$ and hence that
$U_{x}^{E}$ is uniformly open. Now suppose that $V$ is a uniformly $E$-open
set containing $x$ that doesn't contain $y$. Since $x=x_{0}\in V$ there is
some $i$ such that $x_{i}\in V$ but $x_{i+1}\notin V$. Since $V$ is uniformly
$E$-open and $x_{i+1}\in B(x_{i},E)$, this is a contradiction.
\end{proof}

Note that by definition the chain component of $x$ is the intersection of all
the sets $U_{x}^{E}$. From the above lemma also follows the classical fact
(which we will use without reference) that $X$ is chain connected if and only
if the only uniformly open subsets of $X$ are $X$ and $\varnothing$.

A simple example to illustrate these concepts is the following: Let
$\gamma_{1}$ be the graph of $y=\frac{1}{x}$ and $\gamma_{2}$ be the $x$-axis,
and $X$ be the union of these sets, with the uniformity of the subspace metric
(which coincides with the subspace uniformity). The sets $\gamma_{i}$ are open
in $X$ but not uniformly open in $X$. $X$ is not connected, but is chain
connected, and in particular $X$ has two components but only one chain component.

We will abuse notation involving images and inverse images of subsets of
$X\times X$, for example writing $f(E)$ rather than $\left(  f\times f\right)
(E)$. In this notation one may take the definition of uniform continuity of
$f:X\rightarrow Y$ between uniform spaces to be that for any entourage $E$ in
$Y$, $f^{-1}(E)$ is an entourage in $X$. Equivalently, for every entourage $E
$ in $Y$ there is an entourage $F$ in $X$ such that $f(F)\subset E$. Following
\cite{BPUU}, we say that $f$ is \textit{bi-uniformly continuous} if $f$ is
uniformly continuous and for every entourage $E$ in $X$, $f(E)$ is an
entourage in $f(X)$. A bijective bi-uniformly continuous function is called a
\textit{uniform homeomorphism}. It is easy to check that if $f$ is uniformly
continuous, the inverse image (resp. image) of any uniformly open (resp. chain
connected) set is uniformly open (resp. chain connected).

We now very briefly recall the basics of discrete homotopy theory for uniform
spaces, continuing from the Introduction. Much of this is from \cite{BPUU},
but \cite{PLS} has additional results and uses our current notation. Also,
\cite{PW1} has an exposition in the more familiar setting of metric spaces.
Let $X$ be a uniform space and $E$ be an entourage. There are two basic
$E$-homotopy moves that are useful: adding or taking away a repeated point.
For example, by adding or taking away repeated points we can always assume, up
to $E$-homotopy, that two $E$-chains with the same endpoints have the same
number of points; one can then work with \textquotedblleft
corresponding\textquotedblright\ points, which is useful for some arguments.
This is also helpful when considering concatenation: if $\alpha$ and $\beta$
are $E$-chains such that the last point $y$ of $\alpha$ is the first point of
$\beta$, $\alpha\ast\beta$ denotes the concatenation of $\beta$ followed by
$\alpha$. Strictly speaking, the concatenation repeats $y$, but the duplicate
can be removed up to $E$-homotopy. We denote by $\overline{\alpha}$ the
reversal of $\alpha$. If $\alpha$ is an $E$-loop that is $E$-homotopic to its
start/end point then $\alpha$ is called $E$-null. We will denote
$[\{x_{0},...,x_{n}\}]_{E}$ simply by $[x_{0},...,x_{n}]_{E}$. As long as $X$
is chain connected, choice of basepoint does not matter up to uniform
homeomorphism (\cite{BPUU}), and we may always choose (and we will always
assume) the maps in these constructions to be basepoint preserving. For
example, if $\ast$ is the basepoint in $X$, we choose $[\ast]_{E}%
=[\{\ast\}]_{E}$ to be the basepoint of $X_{E}$. Note that by definition, the
image of $\phi_{E}$ is $U_{\ast}^{E}$ (see Lemma \ref{10}).

For any entourage $F\subset E$, define $F^{\ast}$ to be the set of all
$([\alpha]_{E},[\beta]_{E})$ such that $[\alpha\ast\overline{\beta}%
]_{E}=[a,b]_{E}$, where $a,b$ are the endpoints of $\alpha,\beta$,
respectively, and $(a,b)\in F$. This is equivalent to the slightly more
cumbersome definition in \cite{BPUU}. It is an easy exercise that
$([\alpha]_{E},[\beta]_{E})\in F^{\ast}$ if and only if there is an $F$-chain
$\gamma$ from $a$ to be such that $[\gamma]_{E}=[a,b]_{E}$. The set of all
such $F^{\ast}$ is the basis for the naturual uniform structure on $X_{E}$
that we will use. Moreover, the entourages $F^{\ast}$ are invariant with
respect to the action of $\pi_{E}(X)$ and $\phi_{E}$ is a uniform
homeomorphism restricted to any $B(x,F^{\ast})$ onto $B(\phi_{E}(x),F)$. Note
also that by definition $\phi_{E}(F^{\ast})\subset F$, and if $X$ is chain
connected then
\begin{equation}
\phi_{E}(F^{\ast})=F\text{.} \label{surj}%
\end{equation}
To see this, suppose $(x,y)\in F$. Since $X$ is chain connected, there is an
$E$-chain $\alpha$ from $\ast$ to $x$. Letting $\beta:=\alpha\ast\{x,y\}$,
$([\alpha]_{E},[\beta]_{E})\in F^{\ast}$ by definition, and $\phi_{E}%
([\alpha]_{E})=x$ and $\phi_{E}([\beta]_{E})=y$. If $X$ is not chain
connected, the same argument shows that
\begin{equation}
\phi_{E}(F^{\ast})=F^{con}:=F\cap(X^{c}\cap X^{c}) \label{surj2}%
\end{equation}
where $X^{c}$ is the chain component of $X$ containing the basepoint. Note
that by definition, the sets $F^{con}$ are a basis for the subspace uniformity
on $X^{c}$.

The first part of the next Lemma is from \cite{PLS} (it was hinted at but not
explicitly stated in \cite{BPUU}). The second numbered statement is the analog
of the homotopy lifting property from clasical covering space theory.

\begin{lemma}
[Chain Lifting]\label{cl}Let $X$ be a uniform space and $E$ be an entourage.
Suppose that $\beta:=\{x_{0},...,x_{n}\}$ is an $E$-chain and $[\alpha]_{E}$
is such that $\phi_{E}([\alpha]_{E})=x_{0}$. Let $y_{i}:=[\alpha\ast
\{x_{0},...,x_{i}\}]_{E}$. Then $\widetilde{\beta}:=\{y_{0}=[\alpha]_{E}%
,y_{1},...,y_{n}=[\alpha\ast\beta]_{E}\}$ is the unique \textquotedblleft
lift\textquotedblright\ of $\beta$ at $[\alpha]_{E}$. That is,
$\widetilde{\beta}$ is the unique $E^{\ast}$-chain in $X_{E}$ starting at
$[\alpha]_{E}$ such that $\phi_{E}(\widetilde{\beta})=\beta$. Moreover,

\begin{enumerate}
\item If $\beta$ is an $F$-chain for some entourage $F\subset E$ then
$\widetilde{\beta}$ is an $F^{\ast}$-chain.

\item If $[\beta]_{E}=[\gamma]_{E}$ then $[\widetilde{\beta}]_{E^{\ast}%
}=[\widetilde{\gamma}]_{E^{\ast}}$.
\end{enumerate}
\end{lemma}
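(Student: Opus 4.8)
The plan is to check first that the displayed formula genuinely produces a lift, then to prove uniqueness by the standard covering-space induction, and finally to dispatch the two numbered refinements; statement 1 in fact falls out of the existence argument. Throughout I will use the two elementary facts available from the excerpt: that a chain concatenated with its own reversal is $E$-null (obtained by repeatedly deleting a turnaround point or a duplicate point), and that $\phi_{E}$ restricts to a bijection of $B(x,F^{\ast})$ onto $B(\phi_{E}(x),F)$ for every $F\subset E$.

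First I would write $\mu_{i}:=\alpha\ast\{x_{0},\dots,x_{i}\}$, so that $y_{i}=[\mu_{i}]_{E}$ and $\phi_{E}(y_{i})$ is the endpoint of $\mu_{i}$, namely $x_{i}$; this gives $\phi_{E}(\widetilde{\beta})=\beta$ and $y_{0}=[\alpha]_{E}$ immediately. The one substantive computation is that $(y_{i},y_{i+1})\in F^{\ast}$ whenever $(x_{i},x_{i+1})\in F$. Since $\mu_{i+1}$ is exactly $\mu_{i}$ with the single point $x_{i+1}$ appended, the chain $\mu_{i}\ast\overline{\mu_{i+1}}$ has $\mu_{i}$ adjacent to $\overline{\mu_{i}}$, and a backtracking cancellation collapses it to the two-point chain on its endpoints $x_{i}$ and $x_{i+1}$. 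Hence $[\mu_{i}\ast\overline{\mu_{i+1}}]_{E}=[x_{i},x_{i+1}]_{E}$, and because $(x_{i},x_{i+1})\in F$ the definition of $F^{\ast}$ gives $(y_{i},y_{i+1})\in F^{\ast}$. Specializing to $F=E$ shows $\widetilde{\beta}$ is an $E^{\ast}$-chain lifting $\beta$, while keeping $F$ general is precisely statement 1.

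For uniqueness, suppose $\{z_{0}=[\alpha]_{E},z_{1},\dots,z_{n}\}$ is any $E^{\ast}$-chain in $X_{E}$ with $\phi_{E}(z_{j})=x_{j}$ for all $j$. I would argue $z_{i}=y_{i}$ by induction, the base case being $z_{0}=y_{0}$. Given $z_{i}=y_{i}$, both $z_{i+1}$ and $y_{i+1}$ lie in $B(y_{i},E^{\ast})$ (the former because $(z_{i},z_{i+1})\in E^{\ast}$, the latter by the previous paragraph) and both map under $\phi_{E}$ to $x_{i+1}\in B(x_{i},E)$; since $\phi_{E}$ is injective on $B(y_{i},E^{\ast})$, we conclude $z_{i+1}=y_{i+1}$.

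Finally, for statement 2 I would reduce to a single elementary move in the $E$-homotopy from $\beta$ to $\gamma$ and then concatenate, so assume $\gamma$ is obtained from $\beta$ by inserting one interior point $w$ between $x_{k}$ and $x_{k+1}$. The claim I would establish is that $\widetilde{\gamma}$ is nothing but $\widetilde{\beta}$ with the single class $[\alpha\ast\{x_{0},\dots,x_{k},w\}]_{E}$ inserted: for indices beyond the insertion the defining chain $\alpha\ast\{x_{0},\dots,x_{k},w,x_{k+1},\dots,x_{i}\}$ has $w$ as an interior point of an $E$-chain (here I use that $\beta$ is an $E$-chain, so $(x_{k},x_{k+1})\in E$), and removing it is an $E$-homotopy, so that lift point equals $y_{i}$. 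Together with the fact that concatenation respects $E$-homotopy, so that $\widetilde{\beta}$ and $\widetilde{\gamma}$ share the endpoint $[\alpha\ast\beta]_{E}=[\alpha\ast\gamma]_{E}$, this exhibits $\widetilde{\gamma}$ as $\widetilde{\beta}$ plus one interior point, i.e. an elementary $E^{\ast}$-homotopy move, giving $[\widetilde{\beta}]_{E^{\ast}}=[\widetilde{\gamma}]_{E^{\ast}}$. I expect this last bookkeeping step---matching lift points across the insertion and verifying the inserted class is genuinely interior with the right endpoints---to be the main obstacle, more delicate than the backtracking cancellation, since it is where one must keep careful track of how a single elementary move downstairs propagates to exactly one elementary move upstairs.
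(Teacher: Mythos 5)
Your proof is correct and follows essentially the same route as the paper: statement 1 falls out of the definition of $F^{\ast}$ together with the backtracking cancellation, and statement 2 is handled by reducing to a single insertion and showing that the lift of $\gamma$ is the lift of $\beta$ with exactly one interior point added, which is a basic $E^{\ast}$-move. The only differences are cosmetic --- you also write out the existence and uniqueness of the lift, which the paper imports from an earlier reference, and you verify the one-move case by computing the lift points of $\gamma$ directly from the defining formula rather than by lifting the local three-point chain $\{x_{j},x,x_{j+1}\}$ and invoking uniqueness; both arguments are sound.
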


\begin{proof}
Only the second numbered statement is new. By induction we need only verify
that if $\beta$ and $\gamma$ differ by a basic move, then the conclusion
holds. Let $\widetilde{\beta}:=\{\widetilde{x_{0}},...,\widetilde{x_{n}}\}$,
where $\phi_{E}(\widetilde{x_{i}})=x_{i}$. Suppose $\gamma=\{x_{0}%
,...,x_{j},x,x_{j+1},...,x_{n}\}$. By the first part of the lemma, the
$E$-chain $\{x_{j},x,x_{j+1}\}$ has a unique lift to an $E^{\ast}$-chain
$\kappa=\{\widetilde{x_{j}},\widetilde{x},\widetilde{z}\}$ starting at
$\widetilde{x_{j}}$. Since $[x_{j},x,x_{j+1}]_{E}=[x_{j},x_{j+1}]_{E}$, by the
first part of this lemma, $\kappa$ and the unique lift of $\{x_{j},x_{j+1}\}$
must end in the same point. That is, $\widetilde{z}=\widetilde{x_{j+1}}$.
Since $(\widetilde{z},\widetilde{x})\in E^{\ast}$, $(\widetilde{x}%
,\widetilde{x_{j+1}})\in E^{\ast}$. That is, adding $\widetilde{x}$ is a basic
move. By uniqueness, $\{\widetilde{x_{0}},...,\widetilde{x_{j}},\widetilde{x}%
,\widetilde{x_{j+1}},...,\widetilde{x_{n}}\}$ is the lift of $\gamma$.
Removing a point is simply the reverse operation, so the proof of the lemma is complete.
\end{proof}

\begin{corollary}
\label{enull}If $X$ is a uniform space then every $E^{\ast}$-loop in $X_{E}$
based at $[\ast]_{E}$ is $E^{\ast}$-null.
\end{corollary}

\begin{proof}
If $\widetilde{\alpha}$ is an $E^{\ast}$-loop at $[\beta]_{E}$ then
$\widetilde{\alpha}$ is the unique lift of the $E$-chain $\alpha:=\phi
_{E}(\widetilde{\alpha})$ at $[\beta]_{E}$. By the Chain Lifting Lemma, since
$\widetilde{\alpha}$ is a loop, $[\beta]_{E}=[\alpha\ast\beta]_{E}$, which
implies that $[\alpha]_{E}$ is $E$-null. The Chain Lifing Lemma now shows that
$\widetilde{\alpha}$ is $E^{\ast}$-null.
\end{proof}

When $F\subset E$ are entoruages, the mapping $\phi_{EF}:X_{F}\rightarrow
X_{E}$ described in the Introduction is a special case of the \textit{induced
mapping} defined in \cite{BPUU}: Suppose that $f:X\rightarrow Y$ is a
(possibly not even continuous!) function and $E,F$ are entourages in $X,Y$,
respectively, such that $f(E)\subset F$. Then the function $f_{EF}%
:X_{E}\rightarrow Y_{F}$ defined by $f_{EF}([\alpha]_{E})=[f(\alpha)]_{F}$ is
well-defined, and naturally commutes: $f\circ\phi_{E}=\phi_{F}\circ f_{EF}$.
Put another way, if $\alpha$ and $\beta$ are $E$-homotopic in $X$ then
$f(\alpha)$ and $f(\beta)$ are $F$-homotopic in $Y$. If $X$ and $Y$ are metric
spaces and $f$ is $1$-Lipschitz (distance non-increasing), then as a special
case, used frequently without reference, we have: If $\alpha$ and $\beta$ are
$\varepsilon$-homotopic in $X$ then $f(\alpha)$ and $f(\beta)$ are
$\varepsilon$-homotopic in $Y$.

The Chain Lifting Lemma gives a simple way to describe the natural
identification defined in \cite{BPUU}, Proposition 23, $\iota_{EF}%
:X_{F}\rightarrow\left(  X_{E}\right)  _{F^{\ast}}$ whenever $F\subset E$ are
entourages. If $[\alpha]_{F}\in X_{F}$ then $\iota_{EF}([\alpha]_{F}%
):=[\widetilde{\alpha}]_{F^{\ast}}$, where $\widetilde{\alpha}$ is the unique
lift of $\alpha$ to the basepoint in $X_{E}$. We have the following
commutative diagram:%
\begin{equation}%
\begin{array}
[c]{ccc}%
X_{F} & \overrightarrow{\iota_{EF}} & \left(  X_{E}\right)  _{F^{\ast}}\\
\downarrow\phi_{EF} & \swarrow\phi_{F^{\ast}} & \\
X_{E} &  &
\end{array}
\label{iota}%
\end{equation}

Note that $\iota_{EF}$ also identifies any entourage $D^{\ast}$ in $X_{F}$,
given $D\subset F$, with $\left(  D^{\ast}\right)  ^{\ast}$ in $(X_{E})_{F}$.
This identification of $\phi_{F^{\ast}}$ with $\phi_{EF}$ is useful because it
allows one to apply theorems about $\phi_{E}$ to obtain theorems about
$\phi_{EF}$. As a simple example, one immediately obtains that if $X_{E}$ is
chain connected then $\phi_{EF}:X_{F}\rightarrow X_{E}$ is surjective, because
we already know this about $\phi_{F^{\ast}}:\left(  X_{E}\right)  _{F^{\ast}%
}\rightarrow X_{E}$.

By definition, if $D\subset E\subset F$ are entourages, $\phi_{DE}\circ
\phi_{EF}=\phi_{DF}$ and therefore the collection $\{X_{E},\phi_{EF}\}$ forms
an inverse system, which we referred to in the Introduction as the
\textit{fundamental inverse system} of $X$. When $X$ is metrizable, the
fundamental inverse system has a countable cofinal sequence, which is useful
because elements of the inverse limit $\widetilde{X}$ can be constructed by
iteration. In this case, $\widetilde{X}$ is also metrizable. For any entourage
$E$, we will denote the projection by $\phi^{E}:\widetilde{X}\rightarrow
X_{E}$. Note that $X=X_{E}$ for $E:=X\times X$ and in this case $\phi^{E}$ is
the map $\phi$ from the Introduction. When the projections are surjective,
they are bi-uniformly continuous (by definition of the inverse limit uniformity).

\section{Weakly chained spaces}

Beyond the basic definitions and results, the main goal of this section is
Theorem \ref{prelim}. The primary issue is that $X_{E}$ may not be connected
and therefore the bonding maps $\phi_{EF}:X_{F}\rightarrow X_{E}$ of the
fundamental inverse system may not be surjective. The strategy is to show that
for weakly chained spaces the chain components of $X_{E}$ are always uniformly
open, and these components themselves are weakly chained. Restricting to the
chain components containing the basepoint of all $X_{E}$ then produces a new
inverse system with the same inverse limit $\widetilde{X}$ but with the
advantage that the spaces are chain connected and the bonding maps are
surjective. In the metrizable case this means $\phi$ itself is surjective.

We will denote \textit{arbitrary} chain components of $X_{E}$ by $X_{E}^{K}$,
and the restriction of $\phi_{E}$ to $X_{E}^{K}$ by $\phi_{E}^{K}$. For any
entourage $F\subset E$, we denote $F^{\ast}\cap\left(  X_{E}^{K}\times
X_{E}^{K}\right)  $ by $F^{K}$. Note that the collection of all $F^{K}$ is a
basis for the uniform structure of $X_{E}^{K}$. For the chain component of the
identity we will use \textquotedblleft$c$\textquotedblright\ rather than
\textquotedblleft$K$\textquotedblright, e.g. $\phi_{E}^{c}:X_{E}%
^{c}\rightarrow X$. We let $\pi_{E}^{c}(X)\subset\pi_{E}(X)$ denote the
stabilizer of $X_{E}^{c}$ (i.e. the subgroup that leaves $X_{E}^{c}$ invariant).

\begin{lemma}
\label{chcomp}Let $X$ be a uniform space and $E$ be an entourage. Then
$[\alpha]_{E}\in X_{E}^{c}$ if and only if there are arbitrarily fine chains
$\beta$ such that $[\alpha]_{E}=[\beta]_{E}$.
\end{lemma}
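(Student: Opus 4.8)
My plan is to unwind the definition of the chain component $X_E^c$ and then pass chains back and forth across the covering projection $\phi_E$ by means of the Chain Lifting Lemma (Lemma \ref{cl}), which is exactly the tool that relates $E$-chains in $X$ to $E^\ast$-chains in $X_E$. Recall from the earlier remark that the chain component of a point is the intersection of the sets $U_x^E$ taken over all entourages; applied to the basepoint $[\ast]_E$ of $X_E$, whose uniform structure has the sets $F^\ast$ as a basis, this says that $[\alpha]_E \in X_E^c$ precisely when for every $F \subset E$ there is an $F^\ast$-chain in $X_E$ joining $[\ast]_E$ to $[\alpha]_E$. On the other side, since $F \cap E$ refines any given $F$, the phrase ``arbitrarily fine chains'' may be read as ``$F$-chains for every $F \subset E$'', so both conditions are quantified over the same cofinal family, and the problem reduces to matching $F$-chains in $X$ with $F^\ast$-chains in $X_E$ for each fixed $F \subset E$.

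For the ``if'' direction I would fix $F \subset E$ and choose an $F$-chain $\beta$ with $[\beta]_E = [\alpha]_E$; note $\beta$ automatically starts at $\ast$ because it is $E$-homotopic to $\alpha$. Lifting $\beta$ at the basepoint $[\ast]_E$ via Lemma \ref{cl} produces a chain $\widetilde{\beta}$ terminating at $[\{\ast\}\ast\beta]_E = [\beta]_E = [\alpha]_E$, which by the first numbered statement of that lemma is an $F^\ast$-chain. This is exactly the $F^\ast$-chain from $[\ast]_E$ to $[\alpha]_E$ that we need, and since $F \subset E$ was arbitrary it shows $[\alpha]_E \in X_E^c$.

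For the ``only if'' direction I would fix $F \subset E$, take an $F^\ast$-chain $\sigma = \{[\ast]_E = z_0, \dots, z_m = [\alpha]_E\}$ in $X_E$, and set $\beta := \phi_E(\sigma)$. Because $\phi_E(F^\ast) \subset F$ the projection $\beta$ is an $F$-chain from $\ast$ to the endpoint of $\alpha$, and because $F^\ast \subset E^\ast$ the chain $\sigma$ is itself an $E^\ast$-chain starting at $[\ast]_E$ with $\phi_E(\sigma) = \beta$. The uniqueness clause of Lemma \ref{cl} then forces $\sigma$ to be the canonical lift of $\beta$ at $[\ast]_E$, whose endpoint is $[\beta]_E$; comparing with $z_m = [\alpha]_E$ gives $[\beta]_E = [\alpha]_E$, and letting $F$ range over all entourages contained in $E$ furnishes the arbitrarily fine chains.

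I expect the argument to be mostly bookkeeping once Lemma \ref{cl} is available; the one step needing genuine care is the appeal to uniqueness of lifts in the ``only if'' direction, where one must first observe that an $F^\ast$-chain is in particular an $E^\ast$-chain so that the lemma applies and identifies the lift of $\phi_E(\sigma)$ with $\sigma$ itself. A secondary, purely formal point is verifying that restricting the ``arbitrarily fine'' quantifier to entourages contained in $E$ loses nothing, which follows since $F \cap E \subset E$ refines $F$.
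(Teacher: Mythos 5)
Your proposal is correct and follows essentially the same route as the paper: identify membership in $X_E^c$ with the existence of $F^\ast$-chains from $[\ast]_E$ to $[\alpha]_E$ for all $F\subset E$, then use the Chain Lifting Lemma in both directions (lifting an $F$-chain $\beta$ with $[\beta]_E=[\alpha]_E$ to get the $F^\ast$-chain, and projecting an $F^\ast$-chain and invoking uniqueness of lifts to conclude $[\phi_E(\sigma)]_E=[\alpha]_E$). The extra care you take with the cofinality of $\{F: F\subset E\}$ and with the endpoint bookkeeping is sound but not a departure from the paper's argument.
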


\begin{proof}
That $[\alpha]_{E}\in X_{E}^{c}$ is equivalent to: for every entourage
$F\subset E$ there is an $F^{\ast}$-chain $\widehat{\beta}$ from $[\ast]_{E}$
to $[\alpha]_{E}$. If such a $\widehat{\beta}$ exists then $\widehat{\beta}$
is the unique lift of the $F$-chain $\beta:=\phi_{E}(\widehat{\beta})$, and
since $\widehat{\beta}$ ends at $[\alpha]_{E}$, by the Chain Lifting Lemma
(Lemma \ref{cl}), $[\beta]_{E}=[\alpha]_{E}$. Conversely, if there is such a
$\beta$, then by the Chain Lifting Lemma the lift of $\beta$ to $[\ast]_{E}$
is an $F^{\ast}$-chain that ends at $[\alpha]_{E}$.
\end{proof}

\begin{lemma}
\label{stabil}If $X$ is a uniform space and $F\subset E$ are entourages then
\[
\pi_{E}^{c}(X)=\pi_{E}(X)\cap X_{E}^{c}\text{.}%
\]
In particular, $F^{c}$ is invariant with respect to $\pi_{E}^{c}(X)$.
\end{lemma}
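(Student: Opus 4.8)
The plan is to unwind the abuse of notation in the statement: each element of $\pi_E(X)$ plays two roles, as an equivalence class $[\lambda]_E$ of $E$-loops at $\ast$ --- hence a point of the fiber $\phi_E^{-1}(\ast)\subset X_E$ --- and as a deck transformation of the regular covering map $\phi_E$. First I would pin down the link between the two roles. Since $\phi_E$ is a regular covering map, its deck group $\pi_E(X)$ acts simply transitively on the fiber $\phi_E^{-1}(\ast)$, and that fiber is exactly the set of loop classes, so the deck transformation $g_{[\lambda]}$ attached to $[\lambda]_E$ is the unique one with $g_{[\lambda]}([\ast]_E)=[\lambda]_E$. (Explicitly $g_{[\lambda]}([\alpha]_E)=[\lambda\ast\alpha]_E$, which preserves endpoints and sends the trivial chain $[\ast]_E$ to $[\lambda]_E$.) Under this identification the set $\pi_E(X)\cap X_E^c$ consists of those $[\lambda]_E$ whose underlying point lies in the chain component $X_E^c$ of $[\ast]_E$, so the first assertion reduces to showing that $g_{[\lambda]}$ stabilizes $X_E^c$ precisely when $[\lambda]_E\in X_E^c$.

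The main step is then short. As $g:=g_{[\lambda]}$ is a uniform homeomorphism, it and its inverse carry chain connected sets to chain connected sets, so $g$ permutes the chain components of $X_E$; in particular $g(X_E^c)$ is again a chain component. Because the chain components partition $X_E$ and $g(X_E^c)$ contains $g([\ast]_E)=[\lambda]_E$, the set $g(X_E^c)$ must be the chain component of $[\lambda]_E$. Hence $g(X_E^c)=X_E^c$ exactly when $[\lambda]_E$ and $[\ast]_E$ lie in the same chain component, i.e. when $[\lambda]_E\in X_E^c$; letting $g$ range over $\pi_E(X)$ gives $\pi_E^c(X)=\pi_E(X)\cap X_E^c$.

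For the final (``in particular'') assertion I would invoke the already-recorded fact that each $F^\ast$ is invariant under the full action of $\pi_E(X)$, so $g(F^\ast)=F^\ast$ for every $g$. If in addition $g\in\pi_E^c(X)$, then $g$ restricts to a bijection of $X_E^c$, whence $g(X_E^c\times X_E^c)=X_E^c\times X_E^c$; since $g$ (acting as $g\times g$) is a bijection it commutes with intersection, and therefore $g(F^c)=g(F^\ast)\cap g(X_E^c\times X_E^c)=F^\ast\cap(X_E^c\times X_E^c)=F^c$. I do not expect a genuine obstacle here: the only point needing care is the bookkeeping that identifies $\pi_E(X)$ with the fiber and the verification that $g_{[\lambda]}([\ast]_E)=[\lambda]_E$, after which everything follows formally from the single fact that uniform homeomorphisms permute chain components.
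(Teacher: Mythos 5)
Your proof is correct and follows essentially the same route as the paper's: both arguments come down to the observation that the deck transformation $g=[\lambda]_{E}$ carries arbitrarily fine chains based at $[\ast]_{E}$ to arbitrarily fine chains based at $[\lambda]_{E}$, so that $g(X_{E}^{c})$ is the chain component of $[\lambda]_{E}$ and hence equals $X_{E}^{c}$ precisely when $[\lambda]_{E}\in X_{E}^{c}$. The paper verifies this by explicitly concatenating $F^{\ast}$-chains using the invariance $g(F^{\ast})=F^{\ast}$, whereas you invoke the general fact that a (bi-)uniform homeomorphism permutes chain components; this is only a difference of packaging, and your handling of the ``in particular'' clause, $g(F^{c})=g(F^{\ast})\cap g(X_{E}^{c}\times X_{E}^{c})=F^{c}$, is exactly what is intended.
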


\begin{proof}
Let $[\alpha]_{E}\in X_{E}^{c}$ and $g=[\lambda]_{E}\in\pi_{E}(X)$; that is
$g$ is the uniform homeomorphism of $X_{E}$ induced by pre-concatenation by
$\lambda$. Then $[\alpha]_{E}\in X_{E}^{c}$ if and only if for every entourage
$F\subset E$ there is an $F^{\ast}$-chain $\beta$ from $[\ast]_{E}$ to
$[\alpha]_{E}$. If $g\in\pi_{E}(X)\cap X_{E}^{c}$ then there is an $F^{\ast}%
$-chain $\beta^{\prime}$ from $[\ast]_{E}$ to $[\lambda]_{E}$. Since
$g(F^{\ast})=F^{\ast}$, $g(\beta)$ is an $F^{\ast}$-chain from $[\lambda]_{E}$
to $g([\alpha]_{E})$. Then $\beta^{\prime}\ast\beta$ is an $F^{\ast}$-chain
from $[\ast]_{E}$ to $g([\alpha]_{E})$. Since $F$ was arbitrary,
$g([\alpha]_{E})\in X_{E}^{c}$ and $g\in\pi_{E}^{c}(X)$. The proof of the
opposite inclusion is similar.
\end{proof}

\begin{definition}
We say that an entourage $E$ in a uniform space $X$ is weakly $F$-chained if
there exists some entourage $F\subset E$ such that for every $(x,y)\in F$
there are arbitrarily fine chains $\alpha$ joining $x,y$ such that
$[\alpha]_{E}=[x,y]_{E}$. If $F$ is not specified we simply say that $E$ is
weakly chained. If $F=E$ we say that $E$ is weakly self-chained. We say that
$X$ is weakly chained if $X$ is chain connected and the uniform structure of
$X$ has a basis of weakly chained entourages.
\end{definition}

\begin{remark}
\label{every}Let $D\supset E\supset F\supset G$ be entourages. It is immediate
from the definition that if $F$ is weakly $E$-chained then $G$ is weakly
$D$-chained. In particular, if $F$ is weakly chained then $G$ is weakly
chained. As a consequence, if $X$ is weakly chained then \textit{every}
entourage in $X$ is weakly chained. In the opposite direction, to prove that a
chain connected space is not weakly chained one need only find a single
entourage that is not.
\end{remark}

\begin{remark}
For metric spaces, the above definition is equivalent to the one given in the
Introduction, which is equivalent to the statement that every metric entourage
$E_{\varepsilon}$ is weakly $E_{\delta}$-chained for some $\delta>0$.
\end{remark}

\begin{remark}
\label{PLS}In \cite{PLS} an entourage $E$ was called \textit{chained} if
whenever $(x,y)\in E$ there is a chain connected set $C$ containing $x,y$
contained in $B(x,E)\cap B(y,E)$. This impies that $X_{E}$ is chain connected.
By Lemma 32 in \cite{PLS}, chained entourages are weakly chained. But chained
entourages must have chain connected balls, whereas weakly chained entourages
need not (for example the metric entourages in the Topologist's Sine Curve).
\end{remark}

\begin{remark}
We do not need it for this paper, but Krasinkiewicz proved in 1978 (\cite{K})
that the continuous image of a pointed $1$-movable continuum is pointed
$1$-moveable. According to Remark \ref{pointed}, equivalently the continuous
image of a compact, metrizable weakly chained space is weakly chained. The
statement is false if one removes compactness (e.g. the uniformly continuous
map of $\mathbb{R}$ onto the identity component of the solenoid). It is a nice
exercise to prove Krasinkiewisz's statement directly from the definition of
weakly chained. Hint: if $f(F)\subset E$ then an \textquotedblleft%
$f(F)$-homotopy\textquotedblright\ is an $E$-homotopy even if $f(F)$ is not an
entourage. Begin by showing that for any $\varepsilon>0$, if $y_{i}$ is
sufficiently close to $y$ in the image of $f$ then there are arbitrarily fine
chains $\alpha$ from $y$ to $y_{i}$ such that $[\alpha]_{\varepsilon}%
=[y,y_{i}]_{\varepsilon}$.
\end{remark}

\begin{proposition}
\label{onto}Let $X$ be a (possibly not chain connected) uniform space and $E$
be an entourage that is weakly $F$-chained. Then

\begin{enumerate}
\item Every chain component of $X$ is uniformly $F$-open in $X$.

\item $E^{\ast}$ is weakly $F^{\ast}$-chained in $X_{E}$ and hence any chain
component $X_{E}^{K}$ of $X_{E}$ is uniformly $F^{\ast}$-open.

\item Every $E^{K}$ is weakly $F^{K}$-chained in $X_{E}^{K}$. In particular,
if $X$ is weakly chained then so is $X_{E}^{K}$.

\item If $X_{E}^{K}$ is a chain component satisfying $X_{E}^{K}\cap\phi
_{E}^{-1}(\ast)\neq\varnothing$ then $\phi_{E}^{K}:X_{E}^{K}\rightarrow X$
maps onto $X^{c}$. This is true for every $X_{E}^{K}$ when $X$ is chain connected.
\end{enumerate}
\end{proposition}

\begin{proof}
For the first part, simply note that if $(x,y)\in F$ then by definition $x,y$
are joined by arbitrarily fine chains and so must lie in the same chain
component. For the second, suppose $([\alpha]_{E},[\beta]_{E})\in F^{\ast}$.
By definition of $F^{\ast}$, if $x:=\phi_{E}([\alpha]_{E})$ and $y:=\phi
_{E}([\beta]_{E})$, $(x,y)\in F$, so for any $D\subset F$ there is a $D$-chain
$\alpha$ from $x$ to $y$ such that $[\alpha]_{E}=[x,y]_{E}$. Let
$\widetilde{\alpha}$ be the unique lift of $\alpha$ starting at $[\alpha]_{E}%
$, which is a $D^{\ast}$-chain. Since $[\alpha]_{E}=[x,y]_{E}$, and
$\{[\alpha]_{E},[\beta]_{E}\}$ is the unique lift of $\{x,y\}$ at
$[\alpha]_{E}$, $\widetilde{\alpha}$ must also end at $[\beta]_{E}$. Again by
the Chain Lifting Lemma, $[\widetilde{\alpha}]_{E^{\ast}}=[[\alpha]_{E}%
,[\beta]_{E}]_{E^{\ast}}$. Since $D$ was arbitrary this shows $E^{\ast}$ is
weakly $F^{\ast}$-chained. The last part of the second part now follows from
the first part.

For the third part, suppose $([\alpha]_{E},[\beta]_{E}\in F^{K}$. By the
second part, $E^{\ast}$ is weakly $F^{\ast}$-chained so for every entourage
$D\subset F$ there is a $D^{\ast}$-chain $\widetilde{\alpha}$ from
$[\alpha]_{E}$ to $[\beta]_{E}$ such that $[\widetilde{\alpha}]_{E^{\ast}%
}=[[\alpha]_{E},[\alpha]_{E}]_{E^{\ast}}$. But since $X_{E}^{K}$ is uniformly
$F^{\ast}$-open by the second part, $\widetilde{\alpha}$ stays inside
$X_{E}^{K}$, meaning that is in fact a $D^{K}$-chain, completing the proof.

For the last part, note that since $\phi_{E}$ is uniformly continuous and
$\ast\in C:=\phi_{E}(X_{E}^{K})$, we have that $C\subset X^{c}$. Since $X^{c}$
is by definition chain connected, the proof will be complete if we show that
$C$ is uniformly $F^{con}:=(F\cap\left(  X^{c}\times X^{c}\right)  )$-open in
$X^{c}$. Suppose that $x\in C$ and $(x,y)\in F^{con}$, which by Equation
\ref{surj2} (located just prior to Lemma \ref{cl}) is equal to $\phi
_{E}(F^{\ast})$. Then there is some $x^{\prime}\in X_{E}^{K}$ such that
$\phi_{E}(x^{\prime})=x$, and there are $x^{\prime\prime},y^{\prime\prime}\in
X_{E}$ such that $\phi_{E}(x^{\prime\prime},y^{\prime\prime})=(x,y)$ and
$(x^{\prime\prime},y^{\prime\prime})\in F^{\ast}$. Since $\phi_{E}(x^{\prime
})=\phi_{E}(x^{\prime\prime})=x$, there is some $g\in\pi_{E}(X)$ such that
$g(x^{\prime\prime})=x^{\prime}$. Since $F^{\ast}$ is $g$-invariant,
$y^{\prime}:=g(y^{\prime\prime})$ satisfies $(x^{\prime},y^{\prime})\in
F^{\ast}$ and $\phi_{E}(y^{\prime})=y$. Since $x^{\prime}\in X_{E}^{K}$ and
$X_{E}^{K}$ is uniformly $F^{\ast}$-open by the second part, $y^{\prime}\in
X_{E}^{K}$ and $y\in C$. Finally, suppose $X$ is chain connected and
$X_{E}^{K}$ is an arbitrary chain component containing some $x$. Since $X$ is
chain connected, there is an $F$-chain from $\phi_{E}(x)$ to $\ast$. Since
$X_{E}^{K}$ is uniformly $F^{\ast}$-open, the lift of that chain at $x$, which
is an $F^{\ast}$-chain, stays inside $X_{E}^{K}$. It ends at a point in
$X_{E}^{K}\cap\phi_{E}^{-1}(\ast)$, completing the proof.
\end{proof}

\begin{theorem}
\label{ucon}Let $X$ be a chain connected uniform space, $F\subset E$ be
entourages. The following are equivalent:

\begin{enumerate}
\item $E$ is weakly $F$-chained.

\item The image of any $F^{K}$-ball is an $F$-ball.

\item Every map $\phi_{E}^{K}:X_{E}^{K}\rightarrow X$ is a covering map in
which $F$-balls are evenly covered by unions of $F^{K}$-balls.

\item Every $X_{E}^{K}$ is uniformly $F^{\ast}$-open.
\end{enumerate}
\end{theorem}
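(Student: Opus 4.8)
The plan is to prove the cycle $1\Rightarrow2\Rightarrow3\Rightarrow5\Rightarrow4\Rightarrow1$, recording first the two local facts that drive everything. By the background, $\phi_{E}$ restricts to a uniform homeomorphism of each ball $B(p,F^{\ast})$ onto $B(\phi_{E}(p),F)$, so $\phi_{E}$ is injective on $F^{\ast}$-balls and $\phi_{E}(F^{\ast})=F$; and by definition $B(p,F^{K})=B(p,F^{\ast})\cap X_{E}^{K}$. I will also use repeatedly that a uniformly $G^{\ast}$-open set containing the first point of a $G^{\ast}$-chain contains the whole chain, and that each component $X_{E}^{K}$ is chain connected with the sets $G^{K}$ as a basis.

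For $1\Rightarrow2$ I would fix $[\alpha]_{E}\in X_{E}^{K}$ and a point $[\delta]_{E}\in B([\alpha]_{E},F^{\ast})$; writing $x=\phi_{E}([\alpha]_{E})$ and $y=\phi_{E}([\delta]_{E})$ we have $(x,y)\in F$, and since $\phi_{E}$ is bijective on $B([\alpha]_{E},F^{\ast})$ the point $[\delta]_{E}$ is the lift of the one-step chain $\{x,y\}$ at $[\alpha]_{E}$, i.e. $[\delta]_{E}=[\alpha\ast\{x,y\}]_{E}$. Weak $F$-chaining supplies, for each $G\subset F$, a $G$-chain $\gamma$ from $x$ to $y$ with $[\gamma]_{E}=[x,y]_{E}$; by part (1) of the Chain Lifting Lemma (Lemma~\ref{cl}) its lift at $[\alpha]_{E}$ is a $G^{\ast}$-chain ending at $[\alpha\ast\gamma]_{E}=[\delta]_{E}$, and letting $G$ shrink shows $[\delta]_{E}$ lies in the chain component of $[\alpha]_{E}$, so $X_{E}^{K}$ is uniformly $F^{\ast}$-open. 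For the converse $2\Rightarrow1$, given $(x,y)\in F$ and a base chain $\alpha$ to $x$, condition 2 places $[\alpha\ast\{x,y\}]_{E}$ in the component $X_{E}^{K}$ of $[\alpha]_{E}$; chain-connectivity yields, for each $G$, a $G^{K}$-chain from $[\alpha]_{E}$ to $[\alpha\ast\{x,y\}]_{E}$, whose projection $\gamma$ is a $G$-chain from $x$ to $y$. Because lifts are unique (Lemma~\ref{cl}), $\gamma$ lifts back to the given $G^{K}$-chain, so $[\alpha\ast\gamma]_{E}=[\alpha\ast\{x,y\}]_{E}$ and hence $[\gamma]_{E}=[x,y]_{E}$; as $G$ was arbitrary this is weak $F$-chaining.

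The equivalence $2\Leftrightarrow3$ is immediate from the local facts: $\phi_{E}^{K}$ maps $B(p,F^{K})\subset B(p,F^{\ast})$ injectively into $B(\phi_{E}(p),F)$, and since $\phi_{E}$ carries $B(p,F^{\ast})$ bijectively onto $B(\phi_{E}(p),F)$, the image is all of $B(\phi_{E}(p),F)$ precisely when $B(p,F^{K})=B(p,F^{\ast})$, i.e. when $X_{E}^{K}$ is uniformly $F^{\ast}$-open. Assuming 2 (equivalently 3), uniform $F^{\ast}$-openness gives $B(p,F^{K})=B(p,F^{\ast})$ for every $p\in X_{E}^{K}$, so $(\phi_{E}^{K})^{-1}(B(x,F))$ is the disjoint union of the $F^{K}$-balls centered at the points of $X_{E}^{K}$ over $x$, each mapped homeomorphically onto $B(x,F)$; this is the asserted even covering, and a covering map onto $X$ is in particular surjective, giving $5\Rightarrow4$.

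The one substantial step is $4\Rightarrow1$, and this I expect to be the main obstacle. First I would observe that condition 4 forces $\pi_{E}(X)$ to act transitively on the chain components: every component meets every fiber, and $\pi_{E}(X)$ is already transitive on each fiber, so by the argument of Lemma~\ref{stabil} the stabilizer of $X_{E}^{K}$ is $\pi_{E}^{K}(X)$ and $\phi_{E}^{K}$ is a regular covering with deck group $\pi_{E}^{K}(X)$. Then, for $(x,y)\in F$ and a base chain $\alpha$ to $x$, surjectivity and chain-connectivity of the component of $[\alpha]_{E}$ produce a fine chain from $x$ to $y$; the difficulty is that surjectivity only guarantees some preimage of $y$ in this component, not the specific point $[\alpha\ast\{x,y\}]_{E}$, so the lift of this fine chain may end at $g\cdot[\alpha\ast\{x,y\}]_{E}$ for a nontrivial deck element $g$. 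Closing the gap amounts to showing that $g$ lies in the relevant stabilizer, equivalently (by Lemma~\ref{chcomp}) that $g$ is represented by arbitrarily fine loops, so that $[\alpha\ast\{x,y\}]_{E}$ returns to the component of $[\alpha]_{E}$ and weak $F$-chaining follows. I expect this to be exactly where surjectivity of all components, rather than of a single one, must be exploited, and where controlling the $E$-homotopy class of the chain furnished by surjectivity is the delicate point.
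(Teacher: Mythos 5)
Your implications $1\Rightarrow2$, $2\Rightarrow1$, and $2\Leftrightarrow3$ are correct and close to the paper's arguments (your $2\Leftrightarrow3$, which uses only that $\phi_{E}$ is a bijection of $B(p,F^{\ast})$ onto $B(\phi_{E}(p),F)$, is if anything a little cleaner than the paper's $2\Rightarrow3$, which invokes the deck group action). Two things are missing. The minor one: in your $3\Rightarrow5$ step you describe $(\phi_{E}^{K})^{-1}(B(x,F))$ as a disjoint union of $F^{K}$-balls each mapped homeomorphically onto $B(x,F)$, but you never show this union is nonempty for every $x\in X$, i.e. that $\phi_{E}^{K}$ is surjective; without that, 5 does not hold and your $5\Rightarrow4$ has nothing to act on. The paper extracts surjectivity from 3 by noting that $\phi_{E}(X_{E}^{K})$ is uniformly $F$-open (statement 3 says exactly that this image absorbs $F$-balls) and hence equals $X$ because $X$ is chain connected. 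This is easy to add.

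The serious gap is the one you flag yourself: $4\Rightarrow1$ is not proved, so your cycle is not closed and statement 4 is left only as a consequence of the others rather than an equivalent condition. Your diagnosis of the obstruction is accurate --- surjectivity of $\phi_{E}^{K}$ hands you \emph{some} preimage of $y$ in the component of $[\alpha]_{E}$, whereas weak chainedness requires the lift of your fine chain to end at the specific point $[\alpha\ast\{x,y\}]_{E}$ --- and showing the ambiguating deck element lies in the stabilizer is essentially as hard as the statement you are trying to prove. The paper sidesteps this by ordering the cycle differently: it proves $3\Rightarrow4\Rightarrow5$ and closes with $5\Rightarrow2$. In $5\Rightarrow2$ the even-covering property is exactly what pins down the preimage: given $\widehat{x}\in X_{E}^{K}$ and $(\widehat{x},\widehat{y})\in F^{\ast}$, statement 5 supplies some $y^{\prime}\in B(\widehat{x},F^{K})\subset X_{E}^{K}$ with $\phi_{E}(y^{\prime})=\phi_{E}(\widehat{y})$, and injectivity of $\phi_{E}$ on $B(\widehat{x},F^{\ast})\supset B(\widehat{x},F^{K})$ forces $y^{\prime}=\widehat{y}$, hence $\widehat{y}\in X_{E}^{K}$. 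So the fix is structural: make the return leg of the cycle exit from 5 (where the even covering plus local injectivity identifies the preimage) rather than from 4, instead of trying to go from surjectivity alone back to statement 1.
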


\begin{proof}
$1\Rightarrow2$: Let $x^{\prime}\in X_{E}^{K}$ and $x:=\phi_{E}(x^{\prime})$.
By Equation \ref{surj} $\phi_{E}(B(x^{\prime},F^{\ast}))=B(x,F)$, so $\phi
_{E}(B(x^{\prime},F^{K}))\subset B(x,F)$ and we need only show the opposite
inclusion. If $(x,y)\in F=\phi_{E}(F^{\ast})$ then there is some $y^{\prime
}\in B(x^{\prime},F^{\ast})$ such that $\phi_{E}(y^{\prime})=y$. By
Proposition \ref{onto}, $X_{E}^{K}$ is uniformly $F^{\ast}$-open and therefore
$y^{\prime}\in X_{E}^{K}$ and hence $y^{\prime}\in B(x^{\prime},F^{K})$.

$2\Rightarrow3$: Since $X$ is chain connected, we already know that $\phi
_{E}:X_{E}\rightarrow X$ is covering map such that $E$-balls are evenly
covered by unions of $E^{\ast}$-balls. Note that $\left(  \phi_{E}^{K}\right)
^{-1}(B(x,F))$ is the union of the intersections of $F^{\ast}$-balls with
$X_{E}^{K}$. Since $\phi_{E}^{K}$ is surjective by Proposition \ref{onto}, the
only remaining question is whether $\phi_{E}^{K}$ restricted to an $F^{K}%
$-ball is surjective onto an $F$-ball, which is precisely what the second
statement gives us.

$3\Rightarrow4$: Let $x^{\prime\prime}\in X_{E}^{K}$ and suppose that
$(x^{\prime\prime},y^{\prime\prime})\in F^{\ast}$. Letting $x:=\phi
_{E}(x^{\prime\prime})$ and $y:=\phi_{E}(y^{\prime\prime})$, we have by
Equation \ref{surj} that $y\in B(x,F)$. Now the restriction of $\phi_{E}$ to
$B(x^{\prime\prime},F^{K})$ is surjective onto $B(x,F)$ and therefore there is
some $y^{\prime}\in B(x^{\prime\prime},F^{K})$ such that $\phi_{E}(y^{\prime
})=y$. But $\phi_{E}$ is 1-1 on $B(x^{\prime\prime},F^{\ast})$, which means
that $y^{\prime}=y^{\prime\prime}$ and therefore $y^{\prime\prime}\in
X_{E}^{K}$.

$4\Rightarrow1$: Let $(x,y)\in F$. By Equation \ref{surj}, $(x,y)\in\phi
_{E}(F^{\ast})$. This means that there exist $(x^{\prime},y^{\prime})\in
F^{\ast}$ such that $\phi_{E}(x^{\prime})=x$ and $\phi_{E}(y^{\prime})=y$.
Then $x^{\prime}$ lies in some chain component $X_{E}^{K}$, and since
$X_{E}^{K}$ is uniformly $F^{\ast}$-open, $y^{\prime}\in X_{E}^{K}$. Since
$X_{E}^{K}$ is chain connected by definition, for any entourage $D\subset F$
there is a $D^{\ast}$-chain $\beta$ joining $x^{\prime}$ and $y^{\prime}$ in
$X_{E}^{K}$. Then $\phi_{E}(\beta)$ is a $D$-chain joining $x$ and $y$.
Moreover, since $\phi_{E}$ is 1-1 from $F^{\ast}$-balls onto $F$-balls, the
unique lift of $\{x,y\}$ to $x^{\prime}$ in $X_{E}$ is $\{x^{\prime}%
,y^{\prime}\}$, which has the same endpoint as $\beta$. Since $\beta$ is the
unique lift of $\phi_{E}(\beta)$ to $x^{\prime}$, it follows from the Chain
Lifting Lemma that $[x,y]_{E}=[\phi_{E}(\beta)]_{E}$.
\end{proof}

From Theorem \ref{ucon} we immediately obtain:

\begin{corollary}
\label{wcc}Let $X$ be a chain connected uniform space and $E$ be an entourage.
Then the following are equivalent:

\begin{enumerate}
\item $E$ is weakly chained.

\item The chain components of $X_{E}$ are uniformly open.

\item Every map $\phi_{E}^{K}:X_{E}^{K}\rightarrow X$ is a covering map.
\end{enumerate}
\end{corollary}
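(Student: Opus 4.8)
The plan is to read statements (1), (2), and (3) off Theorem \ref{ucon} directly, after accounting for the auxiliary entourage $F$, and then to assemble statement (4) from the covering-map conclusion of Theorem \ref{ucon} together with the stabilizer computation in Lemma \ref{stabil}.

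First I would unwind the definitions. "$E$ is weakly chained" means "$E$ is weakly $F$-chained for some $F \subset E$"; "the chain components of $X_E$ are uniformly open" means each is uniformly $G$-open for some entourage $G$, and since the sets $F^{\ast}$ (for $F \subset E$) form a basis for the uniformity of $X_E$, this is the same as each being uniformly $F^{\ast}$-open for some $F$; while "$\phi_E^K$ is surjective for every chain component" in statement (3) involves no auxiliary entourage and is verbatim statement (4) of Theorem \ref{ucon}. With these translations, $1 \Leftrightarrow 3$ is exactly the equivalence $1 \Leftrightarrow 4$ of Theorem \ref{ucon}: the surjectivity condition is $F$-free, so the existential quantifier on $F$ simply disappears. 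For $2$, the direction $1 \Rightarrow 2$ follows from Theorem \ref{ucon} ($1 \Rightarrow 2$) applied to whatever $F$ witnesses weak chainedness; for the converse I would argue one component at a time, showing that if a single $X_E^K$ is uniformly $F^{\ast}$-open then $\phi_E(X_E^K)$ is uniformly $F$-open in the chain connected space $X$, hence all of $X$, so $\phi_E^K$ is surjective. This uses only that $\phi_E$ restricts to a uniform homeomorphism of each $B(\hat x, F^{\ast})$ onto $B(\phi_E(\hat x), F)$, and it sidesteps having to find a single $F$ that works simultaneously for all components.

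The real content is statement (4). Assuming (1), Theorem \ref{ucon} (statement 5, specialized to $K=c$) gives that $\phi_E^c : X_E^c \to X$ is a covering map in which $F$-balls are evenly covered. To see it is regular with deck group $\pi_E^c(X)$, I would identify the deck transformations of $\phi_E^c$ as exactly those $g \in \pi_E(X)$ leaving $X_E^c$ invariant, i.e. the stabilizer $\pi_E^c(X)$. This group acts freely and discretely as a restriction of the free discrete action of $\pi_E(X)$ on $X_E$, and it acts isomorphically, the last point being precisely the invariance of $F^c$ under $\pi_E^c(X)$ supplied by Lemma \ref{stabil}. Conversely, for $4 \Rightarrow 3$ I would note that a regular covering map is surjective and then upgrade "$\phi_E^c$ surjective" to "$\phi_E^K$ surjective for all $K$": given $\hat y \in X_E^K$ with $y = \phi_E(\hat y)$, surjectivity of $\phi_E^c$ produces $\hat y' \in X_E^c$ over $y$, and a deck transformation $g$ of $\phi_E$ carrying $\hat y'$ to $\hat y$ then carries $X_E^c$ bijectively onto $X_E^K$ over the identity of $X$, so $\phi_E^K$ inherits surjectivity from $\phi_E^c$.

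I expect the main obstacle to be the transitivity half of the deck-group claim in (4): one must check that $\pi_E^c(X)$ acts transitively on each fiber of $\phi_E^c$, not merely that $\pi_E(X)$ acts transitively on fibers of $\phi_E$. The crux is that if $g \in \pi_E(X)$ sends one point of $X_E^c$ to another point of $X_E^c$, then $g \in \pi_E^c(X)$. Because the action is isomorphic, so $g(F^{\ast}) = F^{\ast}$, the transformation $g$ carries an $F^{\ast}$-chain from $[\ast]_E$ to the first point into an $F^{\ast}$-chain from $g([\ast]_E)$ to the second; concatenating with a chain witnessing that the second point lies in $X_E^c$ shows $g([\ast]_E) \in X_E^c$, and Lemma \ref{stabil} then places $g$ in $\pi_E^c(X)$. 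Everything else is bookkeeping that matches the quantifier on $F$ between the definition of weakly chained and the fixed-$F$ statements of Theorem \ref{ucon}.
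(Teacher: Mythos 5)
Your proposal is correct and follows exactly the route the paper intends: the paper derives Corollary \ref{wcc} directly from Theorem \ref{ucon} and Lemma \ref{stabil} with no further argument, and your write-up simply fills in the quantifier bookkeeping on $F$ and the deck-group details that the paper leaves implicit. In particular your handling of the fiber-transitivity of $\pi_{E}^{c}(X)$ via Lemma \ref{stabil} is the right way to make the ``immediately obtain'' rigorous.
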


\begin{corollary}
\label{coverable}Every coverable uniform space is weakly chained.
\end{corollary}

\begin{proof}
Coverable spaces by definition have an uniformity basis such that for every
$E$ in the basis, $\phi^{E}:\widetilde{X}\rightarrow X_{E}$ is surjective,
hence $X_{E}$ is chain connected, and hence $X_{E}^{c}=X_{E}$ and so
$X_{E}^{c}$ is trivially uniformly open in $X_{E}$. The proof is now finished
by Remark \ref{every}.
\end{proof}

\begin{example}
\label{solenoid}It is useful to see why some of these statements fail for the
$2$-adic solenoid $\Sigma$. For this purpose we regard $\Sigma$ as a compact
topological group, namely the inverse limit of circles with their normal group
structure and bonding maps that are double covers (which are also
homomorphisms). As is well-known from the theory of compact, connected groups,
the topology of $\Sigma$ has an open set $U$ of the identity in $\Sigma$ that
is locally isomorphic as a local group to $K\times I$, where $K$ is a Cantor
set and $I$ is an open interval in $\mathbb{R}$. The set $U$ uniquely
determines an invariant entourage $E_{U}$ in $\Sigma$ using the rule $(x,y)\in
E_{U}$ if and only if $xy^{-1}\in U$. As was shown in \cite{BPTG}, this local
isomorphism leads to the fact that $\Sigma_{E_{U}}=K\times\mathbb{R}$.
Intuitively speaking, $\Sigma_{E_{U}}$ is the simplest topological group that
can be reconstructed from relations only contained in $U$, and that obviously
should just be the global product $K\times\mathbb{R}$. At any rate, the chain
components of $\Sigma_{E_{U}}$ are copies of $\mathbb{R}$, and since $K$ has
no isolated points, the chain components cannot be uniformly open. This shows
that $\Sigma$ is not weakly chained. Alternatively, since $\Sigma$ is not path
connected, the restriction of $\phi_{E_{U}}$ to chain components cannot be
surjective onto $\Sigma$. By Proposition \ref{onto}.4, $E_{U}$ is not weakly
chained and therefore $\Sigma$ cannot be weakly chained (Remark \ref{every}).
It is also classically known that $\Sigma$ is not pointed 1-movable (\cite{MS}).
\end{example}

\begin{proposition}
\label{onto2}Suppose that $X$ is a weakly chained uniform space. Then for any
entourages $F\subset E$, the restriction $\phi_{EF}^{c}$ of $\phi_{EF}$ to
$X_{F}^{c}$ is a regular covering map onto $X_{E}^{c}$ with deck group
(naturally isomorphic to) $\pi_{F^{\ast}}^{c}(X_{E})$. This includes the
special case $\phi_{E}^{c}:X_{E}\rightarrow X$ with deck group $\pi_{E}%
^{c}(X)$.
\end{proposition}

\begin{proof}
By \textquotedblleft naturally isomorphic\textquotedblright\ we mean the deck
group $D$ of $\phi_{EF}^{c}$ is equal to the group $R$ of the restrictions to
$X_{F}^{c}$ of the elements of $\pi_{c}^{F}(X)$ in the stabilizer subgroup of
$X_{F}^{c}$. We first prove the stated special case. Clearly $R\subset D$.
Suppose that $h\in D$, i.e. $h:X_{E}^{c}\rightarrow X_{E}^{c}$ is a uniform
homeomorphism such that $\phi_{E}^{c}\circ h=\phi_{E}^{c}$. Let $[\alpha
]_{E}=h([\ast]_{E})\in X_{E}^{c}$. Since $\phi_{E}([\alpha]_{E})=\ast$,
$\alpha$ is in fact an $E$-loop, and therefore represents some $h^{\prime}%
\in\pi_{E}(X)$. But since $[\alpha]_{E}\in X_{E}^{c}$, Lemma \ref{stabil}
implies that $h^{\prime}\in\pi_{E}^{c}(X)$. Since the restriction of
$h^{\prime}$ to $X_{E}^{c}$ is also a deck transformation of $\phi_{E}^{c}$,
by uniqueness of deck transformations $h=h^{\prime}$ on $X_{E}^{c}$. For the
general case we refer to Diagram \ref{iota} (just after Lemma \ref{cl}), which
in this case provides a uniform homeomorphism $\iota_{EF}:X_{F}\rightarrow
\left(  X_{E}\right)  _{F^{\ast}}$. Since all maps are basepoint-preserving,
the restriction of $\iota_{EF}$ to $X_{F}^{c}$ identifies the restriction
$\phi_{EF}^{c}:X_{F}^{c}\rightarrow X_{E}$ with $\phi_{F^{\ast}}^{c}:\left(
X_{E}\right)  _{F^{\ast}}^{c}\rightarrow X_{E}$. In addition, $\iota_{EF}$
identifies the deck group of $\phi_{EF}$ with $\pi_{F^{\ast}}^{c}(X_{E})$. By
Proposition \ref{onto}, $X_{E}^{c}$ is weakly chained and by the special case
we just proved, $\phi_{F^{\ast}}^{c}:\left(  X_{E}\right)  _{F^{\ast}}%
^{c}\rightarrow X_{E}^{c}$ is a regular covering map with deck group
$\pi_{F^{\ast}}^{c}(X_{E})$, completing the proof.
\end{proof}

For a weakly chained uniform space, we now have a new inverse system
$\{X_{E}^{c},\phi_{EF}^{c}\}$ in which the bonding maps $\phi_{EF}^{c}$ are
surjective. The inverse limit $\widehat{X}$ of this new system is a subset of
$\widetilde{X}$.

\begin{theorem}
\label{prelim}If $X$ is a weakly chained uniform space then $\phi
:\widetilde{X}\rightarrow X$ is a generalized regular covering map with deck
group $\pi_{U}(X)=\underleftarrow{\lim}\pi_{E}^{c}(X)$.
\end{theorem}

\begin{proof}
We will show that $\widetilde{X}\subset\widehat{X}$ and hence $\widetilde{X}%
=\widehat{X}$ (this does not require metrizability). For this it suffices to
show that if $\widehat{x}=([\alpha_{E}]_{E})\in\widetilde{X}$ then $\phi
^{E}(\widehat{x})=[\alpha_{E}]_{E}\in X_{E}^{c}$ for every $E$. But by
definition of the inverse limit, for all $F\subset E$, $[\alpha_{F}%
]_{E}=[\alpha_{E}]_{E}$; that is, there are arbitrarily fine chains
$\alpha_{F}$ that are $E$-homotopic to $\alpha_{E}$ and the proof that
$\widehat{X}=\widetilde{X}$ is finished by Lemma \ref{chcomp}, and similarly
$\pi_{U}(X)=\underleftarrow{\lim}\pi_{E}^{c}(X)$. That $\phi$ is a generalized
regular covering map is equivalent to the fact that it can be expressed as the
inverse limit of traditional regular covering maps, see Theorem 44 in
\cite{Pcov} (we only note in this context that in \cite{Pcov} we unfortunately
did not include the term \textquotedblleft regular\textquotedblright\ in our
terminology about generalized covers, which might cause some confusion).
\end{proof}

\begin{remark}
\label{pointed}In \cite{Detal}, Brodskiy-Dydak-LeBuz-Mitra made what amounts
to a simple translation of the Berestovskii-Plaut construction into the
language of Rips complexes, although despite made being aware of this fact
years before publication, the authors unfortunately did not acknowledge this
in their paper. Their construction involves a notion of \textquotedblleft
generalized paths\textquotedblright\ from shape theory of the 1970's due to
Krasinkiewicz-Minc (\cite{KM}). It is natural to consider the space of all
generalized paths starting at a basepoint and imitate the traditional
construction of the universal cover. Why this was not done sooner is not
clear, but evidently the new ingredient, \textquotedblleft inspired
by\textquotedblright\ the Berestovskii-Plaut paper, is to give this space a
uniform structure. However, Section 7 of \cite{Detal}, billed as a
\textquotedblleft comparison\textquotedblright\ with the Berestovskii-Plaut
construction, is inadequate. In fact, their construction is precisely the same
as the Berestovskii-Plaut construction via a simple and natural identification
of elements of $\widetilde{X}$ with generalized paths starting at the
basepoint, as is discussed in the two paragraphs after Example 17 in
\cite{Peq}. In their language, surjectivity of the map $\phi:\widetilde{X}%
\rightarrow X$ is called \textquotedblleft uniform joinable\textquotedblright,
and hence Theorem \ref{prelim} shows that weakly chained, metrizable spaces
are uniform joinable.

Note that both uniform joinable and coverable ultimately require some
information about the fundamental inverse system, while weakly chained does
not, making it substantially easier to verify in some situations.

The main new result in \cite{Detal} is Corollary 6.5, that in the compact
metrizable case, uniform joinable is equivalent to pointed 1-movable. Now an
immediate corollary of Theorem \ref{prelim} is that pointed 1-movable and
weakly chained are equivalent for compact metrizable spaces. Moreover, the
uniform fundamental group $\pi_{U}(X)$ in the compact, weakly chained case is
the classical shape group, and the mappings $\lambda$ and $\kappa$ mentioned
in the Introduction are naturally identified. We note that discrete homotopy
theory is in a sense purely intrinsic to the space, unlike shape theory, which
depends on extrinsic approximations of a space by, or embeddings into, nicer
spaces. For example, the proof of Corollary 6.5 in \cite{Detal} begins with
\textquotedblleft Embed $X$ in the Hilbert Cube $Q$.\textquotedblright

Unfortunately, the main statement of \cite{Peq} about the equivalence of
uniform joinable and coverable has an incorrect proof and this question
remains open. We apologize for any inconvenience this error caused. Brendon
LaBuz has recently independently proved in the metric case that weakly chained
implies uniform joinable (\cite{La}).
\end{remark}

\begin{remark}
It might be interesting to understand better how the concept of weakly chained
fits into classical shape theory beyond the compact case. There is an effort
to generalize some aspects of covering space theory due to Fox (\cite{F}), but
his version of covering spaces, which he refers to as \textquotedblleft
overlays\textquotedblright\ require something like \textquotedblleft evenly
covered\textquotedblright\ (he calls it \textquotedblleft evenly
overlaid\textquotedblright). In particular, an overlay is a local
homeomorphism. But generalized regular covering maps in the current sense need
not be local homeomorphisms--indeed they are inverse limits of traditional
regular covering maps, but the diameters of the evenly covered sets may go to
$0$. For example this happens with the Hawiian Earring and factals (see
\cite{BPUU}, Section 7).
\end{remark}

\section{Additional Tools and Examples}

\begin{example}
\label{length}If $X$ is a length (or intrinsic) metric space, i.e. $d(x,y)$ is
the infimum of the lengths of curves joining $x,y$ for all $x,y\in X$, then
$X$ is sink-free. If not, there exist $x,y\in X$ and $\varepsilon>0$ such that
for all $z\in\overline{B(x,\varepsilon)}$ and $w\in\overline{B(y,\varepsilon
)}$, $d(z,w)\geq d(x,y)$. Since $X$ is a length metric, there exists a curve
$c$ from $x$ to $y$ such that $L(c)<d(x,y)+\varepsilon$. By continuity of the
distance function, there must be points $z\in\overline{B(x,\varepsilon)}$ and
$w\in\overline{B(y,\varepsilon)}$ on $c$. By definition of length, there is
some partition $\mathcal{P}$ of $c$ containing $z$ and $w$ such that the sum
$S$ of the segments determined by $\mathcal{P}$ satisfies $S<L(c)+\varepsilon
$. By the triangle inequality, we have
\[
d(x,y)+2\varepsilon>L(c)+\varepsilon>S\geq d(x,z)+d(z,w)+d(w,y)
\]%
\[
=2\varepsilon+d(z,w)\geq d(x,y)+2\varepsilon\text{,}%
\]
a contradiction.
\end{example}

\begin{example}
\label{gm}Non-geodesic metrics may be sink-free, such as any circle in the
plane with the subspace metric. A square in the plane with the subspace metric
provides an example of a space that is locally sink-free, but not sink-free.
\end{example}

\begin{example}
\label{tsc}The Topologist's Sine Curve $S$ (and its closure) with the subspace
metric from the plane is sink-free. This is easily checked by cases. For
example, suppose that $(x_{1},0)$ and $(x_{2},0)$ both lie in $S$ with
$0<x_{1}<x_{2}$, and the slope of the tangent line at $(x_{2},0)$ is negative.
The slope of the tangent line at $(x_{2},0)$ might also be negative, but will
have strictly larger absolute value. Therefore, moving both points up the
curve (positive $y$-direction) moves them closer to one another.
\end{example}

There are two basic types of $\varepsilon$-homotopies of an $\varepsilon
$-chain $\alpha=\{x_{0},...,x_{n}\}$ to $\{x_{0},x_{n}\}$ when the later is an
$\varepsilon$-chain: \textquotedblleft small ones\textquotedblright\ and
\textquotedblleft lean ones\textquotedblright. \textquotedblleft Small
ones\textquotedblright\ stay inside $B(x_{0},\varepsilon)\cap B(x_{n}%
,\varepsilon)$ and were the basis for the definition of \textquotedblleft
chained entourage\textquotedblright\ in \cite{PLS}. \textquotedblleft Lean
ones\textquotedblright\ may extend far away from $x_{0}$ and $x_{n}$, but
points are in \textquotedblleft opposing pairs\textquotedblright\ that are
closer than $\varepsilon$. Here is the formal definition:

\begin{definition}
Let $0<\tau\leq\varepsilon$ and suppose $\alpha=\{x_{0},...,x_{n}%
,y_{n},...,y_{0}\}$ is a $\tau$-chain in a metric space. Then $\alpha$ is
called $\varepsilon$-lean if for every $i$,
\[
d(x_{i},y_{i}),d(x_{i},y_{i+1}),d(y_{i},x_{i+1})<\varepsilon\text{. }%
\]

\end{definition}

\begin{lemma}
\label{lean}If $\alpha$ is an $\varepsilon$-lean $\tau$-chain from $x$ to $y$
for some $0<\tau\leq\varepsilon$ then $[\alpha]_{\varepsilon}%
=[x,y]_{\varepsilon}$.
\end{lemma}

\begin{proof}
Let $\alpha=\{x_{0},...,x_{n},y_{n},...,y_{0}\}$. We will show by induction
that
\begin{equation}
\lbrack x=x_{0},x_{1}...,x_{j},y_{j},...,y_{0}=y]_{\varepsilon}%
=[x,y]_{\varepsilon} \label{indu2}%
\end{equation}
for all $0\leq j\leq n$. The case $j=0$ is obvious. Suppose that Equation
\ref{indu2} is true for some $0\leq j<n$. By definition of $\varepsilon$-lean
and the fact that $\tau\leq\varepsilon$, the following are legal moves:%
\[
\{x_{0},x_{1},...,x_{j},y_{j},...,y_{0}\}\rightarrow\{x_{0},x_{1}%
,...,x_{j},y_{j+1},y_{j},...,y_{0}\}
\]%
\[
\rightarrow\{x_{0},x_{1},...,x_{j},x_{j+1},y_{j+1},y_{j},...,y_{0}\}\text{.}%
\]

\end{proof}

Note that in the above proof we do not use all three inequalities in the
definition, but we include all three for symmetry.

\begin{proposition}
\label{LSF}Let $X$ be a compact metric space and $\varepsilon>0$. Suppose that
there is some $\sigma>0$ such that if $d(x,y)<\sigma$ in $X$ and $(x,y)$ is a
sink then there are arbitrarily fine chains $\beta$ from $x$ to $y$ such that
$[\beta]_{\varepsilon}=[x,y]_{\varepsilon}$. Then whenever $d(x,y)<\sigma$ in
$X$ there exist arbitrarily fine chains $\alpha$ from $x$ to $y$ such that
$[\alpha]_{\varepsilon}=[x,y]_{\varepsilon}$. In particular, compact locally
sink-free metric spaces are weakly chained.
\end{proposition}

\begin{proof}
Suppose $d(x,y)<\sigma$ and $\tau>0$. Let $S$ be the set of all $t>0$ such
that for some $n$ there are $\tau$-chains $\alpha_{1}=\{x=x_{0},x_{1}%
,...,x_{n}\}$ and $\alpha_{2}=\{y=y_{0},y_{1},...,y_{n}\}$ in $X$ such that
(1) $\alpha_{1}\ast\{x_{n},y_{n}\}\ast\overline{\alpha_{2}}$ is $\varepsilon
$-lean and (2) $d(x_{n},y_{n})<t$. Let $m=\inf S$. We claim that either $m=0$
or there exist $\alpha_{1},\alpha_{2}$ satisfying (1) and $(x_{n},y_{n})$ is a
sink with $d(x,y)<\sigma$. In either case we will be finished. In fact, if
$m=0$ then at some point in (2) we have $t<\tau$, which means that $\alpha
_{1}\ast\{x_{n},y_{n}\}\ast\overline{\alpha_{2}}$ is an $\varepsilon$-lean
$\tau$-chain, hence by Lemma \ref{lean} is $\varepsilon$-homotopic to
$\{x,y\}$. Since $\tau$ is arbitrary, this finishes the proof in this case. If
(1) and (2) hold and $(x_{n},y_{n})$ is a sink, then by assumption
$x_{n},y_{n}$ are joined by a $\tau$-chain $\beta$ such that $[\beta
]_{\varepsilon}=[x_{n},y_{n}]_{\varepsilon}$. But then $\alpha_{1}\ast
\beta\ast\overline{\alpha_{2}}$ is a $\tau$-chain such that $[\alpha_{1}%
\ast\beta\ast\overline{\alpha_{2}}]_{\varepsilon}=[x,y]_{\varepsilon}$,
completing the proof.

To prove the claim, first note that $m\leq d(x,y)$ since if $t>d(x,y)$ then
the chains $\alpha_{1}=\{x\}$ and $\alpha_{2}=\{y\}$ satisfy (1) and (2).
Suppose that $m>0$ and let $t_{j}\searrow m$ with $t_{j}\in S$ and $\alpha
_{1}^{j}=\{x=x_{0}^{j},x_{1}^{j},...,x_{n_{j}}^{j}\},\alpha_{2}^{j}%
=\{y=y_{0}^{j},y_{1}^{j},...,y_{n_{j}}^{j}\}$ are sequences of $\tau$-chains
satisfying (1) and (2) for $t=t_{j}$. Taking a subsequence if necessary we may
assume that $x_{n_{j}}\rightarrow x^{\prime}$ and $y_{n_{j}}\rightarrow
y^{\prime}$ with $d(x^{\prime},y^{\prime})\leq m\leq d(x,y)<\sigma$. For large
enough $j$, $d(x_{n_{j}},x^{\prime}),d(y_{n_{j}},y^{\prime})<\tau$. Therefore
the $\tau$-chains $\alpha_{1}^{\prime}=\{x=x_{0}^{j},...,x_{n_{j}}%
^{j},x^{\prime}\}$ and $\alpha_{2}^{\prime}=\{y=y_{0}^{j},...,y_{n_{j}}%
^{j},y^{\prime}\}$ satisfy (1). Suppose that $(x^{\prime},y^{\prime})$ is not
a sink. Then we may find $x^{\prime\prime},y^{\prime\prime}$ arbitrarily close
to $x^{\prime},y^{\prime}$ such that $d(x^{\prime\prime},y^{\prime\prime
})<d(x^{\prime},y^{\prime})\leq m$. Now $\alpha_{1}^{\prime}\ast\{x^{\prime
},x^{\prime\prime}\}$ and $\alpha_{2}^{\prime}\ast\{y^{\prime},y^{\prime
\prime}\}$ satisfy (1) and (2) with $d(x^{\prime\prime},y^{\prime\prime})<m$,
a contradiction. Therefore $(x^{\prime},y^{\prime})$ must be a sink,
completing the proof of the first statement.

For the last statement, note that if $X$ is LSF($\sigma$) then the hypothesis
of the theorem about $\sigma$ is vaccuous for every $\varepsilon>0$. In other
words, the conclusion of the first statement of the proposition becomes: If
$d(x,y)<\varepsilon<\sigma$ then there are arbitrarily fine chains $\alpha$
from $x$ to $y$ such that $[\alpha]_{\varepsilon}=[x,y]_{\varepsilon}$. Put
another way, every sufficiently small $E_{\varepsilon}$ is weakly
self-chained. Since LSF($\sigma$) requires chain connectivity, the proof of
the last statement is done.
\end{proof}

We will now consider inverse limits$\ X=\underleftarrow{\lim}X_{r}$ of metric
spaces $X_{r}$ indexed on an is an unbounded subset $\Lambda$ of
$\mathbb{R}^{+}$, with surjective, $1$-Lipschitz bonding maps. Since the
indexing set has a countable, totally ordered cofinal set, it follows that the
projection maps $\psi_{r}:X\rightarrow X_{r}$ are also surjective. We will
denote elements of $X=\underleftarrow{\lim}X_{r}$ by $\widehat{x}$ and
$\psi_{r}(\widehat{x})$ by $x_{r}$. Note that \textquotedblleft
sub-Euclidean\textquotedblright\ is stronger than $1$-Lipschitz, so all
results below for $1$-Lipschitz bonding maps are valid for sub-Euclidean
bonding maps.

\begin{lemma}
\label{basis}Consider an inverse system as above. Then

\begin{enumerate}
\item A basis for the inverse limit uniformity on $X=\underleftarrow{\lim
}X_{r}$ consists of the set of all $E_{r,\varepsilon}:=\{(\widehat{x}%
,\widehat{y}):d(x_{r},y_{r})<\varepsilon\}$ for $r,\varepsilon>0$. Moreover,
$E_{r,\varepsilon}\subset E_{s,\delta}$ if $r\geq s$ and $\varepsilon
\leq\delta$.

\item If the bonding maps are sub-Euclidean then for any fixed $K>0$, the set
of all $E_{r,K}$ is a basis for the inverse limit uniformity.
\end{enumerate}
\end{lemma}

\begin{proof}
A standard basis element for the inverse limit uniformity consists of
entourages
\[
E(\varepsilon_{1},...,\varepsilon_{n};r_{1},...,r_{n}):=\{(\widehat{x}%
,\widehat{y}):d(x_{r_{i}},y_{r_{i}})<\varepsilon_{i}\text{ for all
}i\}\text{.}%
\]
Since each $E_{r,\varepsilon}$ is of this form, we need only show that an
arbitrary entourage of the form $E(\varepsilon_{1},...,\varepsilon_{n}%
;r_{1},...,r_{n})$ contains some $E_{r,\varepsilon}$. Let $\varepsilon
:=\min\{\varepsilon_{1},...,\varepsilon_{n}\}$ and $r:=\max\{r_{1}%
,...,r_{n}\}$. If $(\widehat{x},\widehat{y})\in E_{r,\varepsilon}$ then
$d(x_{r},y_{r})<\varepsilon$. By the $1$-Lipschitz assumption, for any $i$,
\[
d(x_{r_{i}},y_{r_{i}})=d(\psi_{r_{i}r}(x_{r}),\psi_{r_{i}r}(y_{r}))\leq
d(x_{r},x_{r})<\varepsilon\leq\varepsilon_{i}\text{.}%
\]
That is, $E_{r,\varepsilon}\subset E(\varepsilon_{1},...,\varepsilon_{n}%
;r_{1},...,r_{n})$. The second part of the first statement is simply a special
case of what we just proved.

Now suppose the bonding maps are sub-Euclidean. We need only show that for any
$E_{r,\varepsilon}$ there is some $E_{s,K}\subset E_{r,\varepsilon}$. But by
the first part we need only note that we may choose $s$ large enough that
$\frac{r}{s}K<\varepsilon$.
\end{proof}

By definition, $\widehat{\gamma}=\{\widehat{x_{0}},...,\widehat{x_{m}}\}$ is
an $E_{r,\varepsilon}$-chain in $X$ if and only if $\gamma_{r}=\{\left(
x_{0}\right)  _{r},...,\left(  x_{m}\right)  _{r}\}$ is an $\varepsilon$-chain
in $X_{r}$. In this circumstance, $\widehat{\gamma}$ will be called a
\textit{lift} of $\gamma$. When $\gamma$ is a loop we will always require that
$\widehat{x_{0}}=\widehat{x_{m}}$ so that any lift $\widehat{\gamma}$ is also
a loop. Note that for $\varepsilon$-chains in $X_{r}$, lifts always exist due
to the surjectivity of the maps $\psi_{r}$. Consider a basic move adding $x\in
X_{r}$ between points $x_{i}$ and $x_{i+1}$ in an $\varepsilon$-chain
$\gamma=\{x_{0},...,x_{m}\}$ in $X_{r}$ which has a given lift
$\widehat{\gamma}=\{\widehat{x_{0}},...,\widehat{x_{m}}\}$. Let $\widehat{x}$
be such that $(\widehat{x})_{r}=x$. Since $d(x_{i},x),d(x,x_{i+1}%
)<\varepsilon$, $(\widehat{x_{i}},\widehat{x}),(\widehat{x},\widehat{x_{i+1}%
})\in E_{r,\varepsilon}$ and in particular, $\{\widehat{x_{0}}%
,...,\widehat{x_{i}},\widehat{x},\widehat{x_{i+1}},...,\widehat{x_{m}}\}$ is
an $E_{r,\varepsilon}$-chain. That is, adding $\widehat{x}$ is a basic move.
The basic move of removing a point $\widehat{x_{i}}$ from $\widehat{\gamma}$
leaves an $E_{r,\varepsilon}$-chain if and only if removing $x_{i}$ leaves an
$\varepsilon$-chain in $X_{r}$. It now follows by induction that if
$\eta=\{\gamma_{0},...,\gamma_{m}\}$ is an $\varepsilon$-homotopy in $X_{r}$
then there are lifts $\widehat{\gamma_{i}}$ of $\gamma_{i}$ such that
$\widehat{\eta}=\{\widehat{\gamma_{0}},...,\widehat{\gamma_{m}}\}$ is an
$E_{r,\varepsilon}$-homotopy. Then $\widehat{\eta}$ will be called a
\textit{lift }of $\eta$. Clearly we can always specify in advance
$\widehat{\gamma_{0}}\in\psi_{\alpha}^{-1}(\gamma_{0})$. What if we have also
specified $\widehat{\gamma_{m}}$ in advance? When the $E_{r,\varepsilon}%
$-homotopy construction above is finished, we have some particular, possibly
different lift $\widehat{\gamma_{m}^{\prime}}=\{\widehat{y_{0}^{\prime}%
},...,\widehat{y_{k}^{\prime}}\}$ of $\gamma_{m}=\{y_{0},...,y_{k}\}$. Note
that since the endpoints in the chains of $\widehat{\eta}$ are never changed,
$\widehat{y_{0}^{\prime}}=\widehat{y_{0}}$ and $\widehat{y_{k}^{\prime}%
}=\widehat{y_{k}}$. Proceeding inductively, observe that for any $i$,
$d((\widehat{y_{i}^{\prime}})_{r},(\widehat{y_{i}})_{r})=d\left(  y_{i}%
,y_{i}\right)  =0<\varepsilon$ and therefore $(\widehat{y_{i}^{\prime}%
},\widehat{y_{i}})\in E_{r,\varepsilon}$. Likewise, $(\widehat{y_{i}%
},\widehat{y_{i+1}^{\prime}})\in E_{r,\varepsilon}$, and we have the following
basic moves:
\[
\{\widehat{y_{0}},...,\widehat{y_{i-1}},\widehat{y_{i}^{\prime}}%
,...,\widehat{y_{k}^{\prime}}\}\rightarrow\{\widehat{y_{0}}%
,...,\widehat{y_{i-1}},\widehat{y_{i}^{\prime}},\widehat{y_{i}}%
,\widehat{y_{i+1}^{\prime}},...,\widehat{y_{k}^{\prime}}\}
\]%
\[
\rightarrow\{\widehat{y_{0}^{\prime}},...,\widehat{y_{i-1}},\widehat{y_{i}%
},\widehat{y_{i+1}^{\prime}},...,\widehat{y_{k}^{\prime}}\}\text{.}%
\]
Therefore we may extend $\widehat{\eta}$ to an $E_{r,\varepsilon}$-homotopy
from $\widehat{\gamma_{0}}$ to $\widehat{\gamma_{m}}$. We will call such a
homotopy \textit{a lift of }$\eta$\textit{\ \textquotedblleft with specified
endpoints\textquotedblright}. To summarize:

\begin{lemma}
\label{homlift}Let $\{X_{r},\psi_{rs}\}_{r,s\in\Lambda}$ be an inverse system
of metric spaces with surjective $1$-Lipschitz bonding maps, where $\Lambda$
is an unbounded subset of $\mathbb{R}$. Suppose that $E_{r,\varepsilon}$ is an
entourage in $X=\underleftarrow{\lim}X_{r}$. If $\eta=\{\gamma_{0}%
,...,\gamma_{k}\}$ is an $\varepsilon$-homotopy in $X_{r}$ then for any choice
of lifts $\widehat{\gamma_{0}},\widehat{\gamma_{k}}$ of $\gamma_{0},\gamma
_{k}$ there is a lift $\widehat{\eta}$ of $\eta$, where $\widehat{\eta}$ is an
$E_{r,\varepsilon}$-homotopy from $\widehat{\gamma_{0}}$ to $\widehat{\gamma
_{k}}$ (i.e. with specified endpoints). In particular, any lift of an
$\varepsilon$-null $\varepsilon$-loop in $X_{r}$ is $E_{r,\varepsilon}$-null
in $X$.
\end{lemma}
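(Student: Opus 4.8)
The plan is to lift the homotopy $\eta$ one elementary move at a time and then append a short correction homotopy so that the lift terminates at the pre-specified chain $\widehat{\gamma_k}$. I would begin from the fact recorded just above the lemma: a chain $\widehat{\beta}$ in $X$ is an $E_{r,\varepsilon}$-chain exactly when its projection to $X_r$ is an $\varepsilon$-chain, and this characterization is respected by both basic moves. I note at the outset that, since a fixed-endpoint homotopy cannot connect chains with distinct endpoints, the two chosen lifts $\widehat{\gamma_0}$ and $\widehat{\gamma_k}$ are understood to share first and last points; this is automatic in the loop/basepoint setting where the lemma is applied.

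First I would check that each basic move lifts. For a move that inserts a point $x$ of $X_r$ between consecutive points $x_i, x_{i+1}$, I pick any $\widehat{x}\in\psi_r^{-1}(x)$, which exists by surjectivity of $\psi_r$. Since $d(x_i,x), d(x,x_{i+1})<\varepsilon$, the pairs $(\widehat{x_i},\widehat{x})$ and $(\widehat{x},\widehat{x_{i+1}})$ lie in $E_{r,\varepsilon}$ by definition of that entourage, so inserting $\widehat{x}$ is legal in $X$; removal is the reverse operation and is legal in $X$ precisely when the corresponding removal is legal in $X_r$. Starting at the chosen lift $\widehat{\gamma_0}$ and applying the lifted moves in order, I obtain by induction an $E_{r,\varepsilon}$-homotopy whose successive chains project onto $\gamma_0,\ldots,\gamma_k$.

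The hard part will be that this construction pins down $\widehat{\gamma_0}$ but leaves the terminal chain as some lift $\widehat{\gamma_k'}$ produced by the process, which need not equal the pre-specified $\widehat{\gamma_k}$. Since endpoints are never moved during a homotopy, $\widehat{\gamma_k'}$ and $\widehat{\gamma_k}$ agree at their first and last points, so they can differ only at interior positions. The key point is that an interior vertex $\widehat{y_i'}$ and its target $\widehat{y_i}$ both project to the same point $y_i$ of $X_r$, so $d((\widehat{y_i'})_r,(\widehat{y_i})_r)=d(y_i,y_i)=0<\varepsilon$; hence $(\widehat{y_i'},\widehat{y_i})\in E_{r,\varepsilon}$, and $\widehat{y_i}$ is likewise $E_{r,\varepsilon}$-close to the neighboring vertices. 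I would therefore correct the interior vertices one at a time: insert $\widehat{y_i}$ next to $\widehat{y_i'}$ (legal by these closeness facts) and then delete $\widehat{y_i'}$. Concatenating this correction onto the homotopy from the previous step produces the desired $E_{r,\varepsilon}$-homotopy from $\widehat{\gamma_0}$ to $\widehat{\gamma_k}$.

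For the final assertion I would specialize: if $\gamma$ is an $\varepsilon$-null $\varepsilon$-loop, there is an $\varepsilon$-homotopy in $X_r$ from $\gamma$ to a single point, and a single point lifts trivially to a single point, so applying the result to any chosen lift of $\gamma$ exhibits it as $E_{r,\varepsilon}$-null in $X$.
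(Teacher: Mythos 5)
Your proof is correct and follows essentially the same route as the paper: lift the elementary moves one at a time starting from $\widehat{\gamma_0}$ (insertions by choosing any preimage, removals by the projection criterion), then append a correction homotopy that swaps each interior vertex $\widehat{y_i'}$ for $\widehat{y_i}$ using the fact that they project to the same point of $X_r$. Your explicit remark that the two specified lifts must share endpoints is a point the paper leaves implicit, but otherwise the arguments coincide.
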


\begin{definition}
\label{refine}Let $f:X\rightarrow Y$ be a uniformly continuous surjection
between metric spaces, $0<\delta<\varepsilon$. Then $f$ is said to be $\left(
\varepsilon,\delta\right)  $-refining if whenever $d(a,b)<\delta$ in $Y$, for
all $a^{\prime}\in f^{-1}(a)$ and $b^{\prime}\in f^{-1}(b)$, there are
arbitrarily fine chains $\alpha$ in $X$ from $a^{\prime}$ to $b^{\prime}$ such
that $[f(\alpha)]_{\varepsilon}=\left[  a,b\right]  _{\varepsilon}$. When
$\delta$ exists but is not specified we will simply say that $f$ is
$\varepsilon$-refining. If $f$ is $\varepsilon$-refining for every
$\varepsilon>0$ then $f$ is simply called refining.
\end{definition}

\begin{remark}
\label{ref}Note that if $f:X\rightarrow Y$ is $(\varepsilon,\delta)$-refining
then $E_{\varepsilon}$ is trivially weakly $E_{\delta}$-chained in $Y$.
\end{remark}

\begin{lemma}
\label{cref}Let $f:X\rightarrow Y$ be a uniformly continuous surjection
between metric spaces, $0<\delta<\varepsilon$. Then $f$ is $(\varepsilon
,\delta)$-refining if and only if for every $\delta$-chain $\beta$ in $Y$ from
$a$ to $b$ and $a^{\prime}\in f^{-1}(a)$ and $b^{\prime}\in f^{-1}(b)$, there
are arbitrarily fine chains $\alpha$ in $X$ from $a^{\prime}$ to $b^{\prime}$
such that $[f(\alpha)]_{\varepsilon}=\left[  \beta\right]  _{\varepsilon}$.
\end{lemma}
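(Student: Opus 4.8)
The plan is to observe that one implication is essentially a special case while the other is a gluing argument. For the implication that the displayed chain condition forces $f$ to be $(\varepsilon,\delta)$-refining, I would simply note that any $a,b\in Y$ with $d(a,b)<\delta$ give a two-point $\delta$-chain $\beta:=\{a,b\}$ with $[\beta]_\varepsilon=[a,b]_\varepsilon$; applying the hypothesis to this $\beta$ returns precisely the defining condition for $(\varepsilon,\delta)$-refining.

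For the converse, assume $f$ is $(\varepsilon,\delta)$-refining and fix a $\delta$-chain $\beta=\{a=y_0,\dots,y_n=b\}$ in $Y$ together with $a'\in f^{-1}(a)$ and $b'\in f^{-1}(b)$. Using surjectivity of $f$, I would first choose preimages $x_i'\in f^{-1}(y_i)$ for $0<i<n$, and set $x_0':=a'$, $x_n':=b'$. For each $i$ we have $d(y_i,y_{i+1})<\delta$, so the refining property supplies, for every prescribed fineness $\mu>0$, a $\mu$-chain $\alpha_i$ from $x_i'$ to $x_{i+1}'$ with $[f(\alpha_i)]_\varepsilon=[y_i,y_{i+1}]_\varepsilon$. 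Concatenating $\alpha_0,\dots,\alpha_{n-1}$ in order produces a chain $\alpha$ from $a'$ to $b'$; since consecutive pieces meet at the common point $x_{i+1}'$, the concatenation is again a $\mu$-chain, so arbitrary fineness of $\alpha$ is preserved.

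It then remains to verify $[f(\alpha)]_\varepsilon=[\beta]_\varepsilon$. The image $f(\alpha)$ is the concatenation of the chains $f(\alpha_i)$, and each $f(\alpha_i)$ is $\varepsilon$-homotopic, rel endpoints, to the two-point chain $\{y_i,y_{i+1}\}$. I would replace the pieces one at a time: an $\varepsilon$-homotopy of a single segment changes only interior points of that segment and fixes its endpoints, hence each basic move in it remains a basic move in the full concatenation. After performing all $n$ replacements and deleting the duplicated junction points (a legal basic move), $f(\alpha)$ has been $\varepsilon$-homotoped to $\{y_0,y_1,\dots,y_n\}=\beta$, giving $[f(\alpha)]_\varepsilon=[\beta]_\varepsilon$.

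The whole argument is largely bookkeeping; the only step deserving care is the compatibility of $\varepsilon$-homotopy with concatenation, namely that $\varepsilon$-homotopy rel endpoints behaves as a congruence for the concatenation operation. This is the point I expect to spell out, and it rests on the simple observation just noted, that a basic move inside one factor of a concatenated chain is still a basic move in the whole chain, so any $\varepsilon$-homotopy of a factor transfers verbatim to the concatenation.
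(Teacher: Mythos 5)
Your proof is correct and is essentially the paper's argument: the paper organizes the gluing as an induction on the length of $\beta$, concatenating one refined segment at a time, while you concatenate all the segments at once and then invoke the compatibility of $\varepsilon$-homotopy with concatenation, but the substance is the same.
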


\begin{proof}
Necessity is obvious. Suppose that $f$ is $(\varepsilon,\delta)$-refining. The
proof is by induction on $n$ for a $\delta$-chain $\beta=\{x_{0},...,x_{n}\}$.
The $n=1$ case is simply the definition of $(\varepsilon,\delta)$-refining.
Suppose the statement is true for a $\delta$-chain $\beta_{i}:=\{x_{0}%
,...,x_{i}\}$ with $0<i<n$ and let $x_{0}^{\prime}\in f^{-1}(x_{0})$,
$x_{i}^{\prime}\in f^{-1}(x_{i})$ and $x_{i+1}^{\prime}\in f^{-1}(x_{i+1})$.
By assumption there are arbitrarily fine chains $\alpha^{\prime}$ from
$x_{0}^{\prime}$ to $x_{i}^{\prime}$ such that $[f(\alpha^{\prime
})]_{\varepsilon}=[\beta_{i}]_{\varepsilon}$. Since $f$ is $(\varepsilon
,\delta)$-refining there are arbitrarily fine chains $\alpha^{\prime\prime}$
from $x_{i}^{\prime}$ to $x_{i+1}^{\prime}$ such that $[f(\alpha^{\prime
\prime})]_{\varepsilon}=[x_{i},x_{i+1}]_{\varepsilon}$. Then $\alpha
:=\alpha^{\prime}\ast\alpha^{\prime\prime}$ is the desired chain.
\end{proof}

\begin{lemma}
\label{compo}If $f:X\rightarrow Y$ and $g:Y\rightarrow Z$ are uniformly
continuous surjective maps such that $h:=g\circ f$ is $\varepsilon$-refining
then $g$ is $\varepsilon$-refining.
\end{lemma}

\begin{proof}
Suppose that $h$ is $(\varepsilon,\delta)$-refining. Let $d(x,y)<\delta$,
$x^{\prime}\in g^{-1}(x)$, and $y^{\prime}\in g^{-1}(y)$. Since $f$ is
surjective there are $x^{\prime\prime}\in f^{-1}(x^{\prime})$ and
$y^{\prime\prime}\in f^{-1}(y^{\prime})$. Since $h$ is $(\varepsilon,\delta
)$-refining, there are arbitrarily fine chains $\alpha$ from $x^{\prime\prime
}$ to $y^{\prime\prime}$ such that $[h(\alpha)]_{\varepsilon}%
=[x,y]_{\varepsilon}$. Since $f$ is uniformly continuous, $f(\alpha)$ is an
arbitrarily fine chain from $x^{\prime}$ to $y^{\prime}$, and since
$[g(f(\alpha))]_{\varepsilon}=[h(\alpha)]_{\varepsilon}=[x,y]_{\varepsilon}$,
the proof is finished.
\end{proof}

\begin{proposition}
\label{compact}Let $f:X\rightarrow Y$ be a $1$-Lipschitz surjection between
compact metric spaces and $\varepsilon>0$. If for all $y\in Y$ and
$x,x^{\prime}\in f^{-1}(y)$ there are arbitrarily fine chains $\alpha$ from
$x$ to $x^{\prime}$ such that $[\alpha]_{\varepsilon}=[x,x^{\prime
}]_{\varepsilon}$ then $f$ is $\varepsilon$-refining. (Note that these
assumptions force $d(x^{\prime},x)<\varepsilon$ for all such $y,x,x^{\prime}$.)
\end{proposition}

\begin{proof}
If $f$ is $(\varepsilon,\delta)$-refining then since $d(y,y)=0$ we may apply
the definition to obtain arbitrarily fine chains $\alpha$ from $x$ to
$x^{\prime}$ isin the definition of Suppose not. Then there exist $y_{i}%
,z_{i}\in Y$, $x_{i}\in f^{-1}(y_{i})$ and $w_{i}\in f^{-1}(z_{i})$ such that
$d(y_{i},z_{i})\rightarrow0$ and there are not arbitrarily fine chains
$\alpha_{i}$ from $x_{i}$ to $w_{i}$ such that $[f(\alpha_{i})]_{\varepsilon
}=[y_{i},z_{i}]_{\varepsilon}$. Taking subsequences if necessary we may assume
that $y_{i},z_{i}\rightarrow y$, $x_{i}\rightarrow x$ and $w_{i}\rightarrow w$
with $x,w\in f^{-1}(y)$. By assumption there are arbitrarily fine chains
$\alpha$ from $x$ to $w$ such that $[\alpha]_{\varepsilon}=[x,w]_{\varepsilon
}$. Let $\tau>0$. For large enough $i$ we have the following: $d(x_{i}%
,w_{i}),d(x_{i},x),d(w_{i},w)<\min\{\tau,\varepsilon\}$. Letting
$\beta:=\{x_{i},x\}\ast\alpha\ast\{w_{i},w\}$, $\beta$ is a $\tau$-chain and
we have $[\beta]_{\varepsilon}=[x_{i},x,w,w_{i}]_{\varepsilon}=[x_{i}%
,w_{i}]_{\varepsilon}$. Since $f$ is $1$-Lipschitz, $[f(\beta)]_{\varepsilon
}=[f(x_{i}),f(w_{i})]_{\varepsilon}=[x_{i},w_{i}]_{\varepsilon}$, a contradiction.
\end{proof}

\begin{proposition}
\label{invpro}Let $\left\{  X_{r},\psi_{rs}\right\}  _{r,s\in\Lambda}$ be an
inverse system of metric spaces with surjective $1$-Lipschitz bonding maps,
where $\Lambda$ is a closed, unbounded subset of $\mathbb{R}^{+}$. Suppose
that $E_{\varepsilon}$ is weakly chained in $X_{r}$ and for all $t>r$ in
$\mathbb{R}^{+}$ there exist $s,s^{\prime}$ such that $r\leq s<t<s^{\prime}$
and both $\psi_{st}$ and $\psi_{ts^{\prime}}$ are $\varepsilon$-refining. Then
$E_{r,\varepsilon}$ is weakly chained in $X$.
\end{proposition}

\begin{proof}
We will show that if $E_{\varepsilon}$ is weakly $E_{\delta}$-chained in
$X_{r}$ then $E_{r,\varepsilon}$ is weakly $E_{r,\delta}$-chained in $X$. Let
$(\widehat{x},\widehat{y})\in E_{r,\delta}$ and consider the following
statement for $t\geq r$. $J(t)$: There are arbitrarily fine chains $\alpha$ in
$X_{t}$ from $x_{t}$ to $y_{t}$ such that $[\psi_{rt}(\alpha)]_{\varepsilon
}=[x_{r},y_{r}]_{\varepsilon}$. If we show that $J(t)$ is true for all $t\geq
r$ then this implies that $E_{r,\varepsilon}$ is weakly $E_{r,\delta}%
$-chained. In fact, any lift $\widehat{\alpha}$ of a $\sigma$-chain $\alpha$
from $x_{t}$ to $y_{t}$ with specified endpoints $\widehat{x}$ and
$\widehat{y}$ is an $E_{t,\sigma}$-chain and also a lift of $\psi_{rt}%
(\alpha)$. By Lemma \ref{homlift}, $[\widehat{\alpha}]_{E_{r,\varepsilon}%
}=[\widehat{x},\widehat{y}]_{E_{r,\varepsilon}}$.

Note that $J(r)$ is true because $d(x_{r},y_{r})<\delta$, and $E_{\varepsilon
}$ is weakly $E_{\delta}$-chained in $X_{r}$. Let $T:=\sup\{t:J(t)$ is
true$\}$; we will show $T=\infty$. We will first show that $J(t)$ implies
$J(t+u)$ for some $u>0$. In fact, by assumption, for some $u>0$, $\psi
_{t,t+u}$ is $(\varepsilon,\delta)$-refining for some $\delta>0$. Since $J(t)$
is true there is a $\delta$-chain $\beta$ in $X_{t}$ from $x_{t}$ to $y_{t}$
such that $[\psi_{rt}(\beta)]_{\varepsilon}=[x_{r},y_{r}]_{\varepsilon}$.
Since $\psi_{t,t+u}$ is $(\varepsilon,\delta)$-refining there exist
arbitrarily fine chains $\alpha$ from $x_{t+u}$ to $y_{t+u}$ such that
$[\psi_{t,t+u}(\alpha)]_{\varepsilon}=[\beta]_{\varepsilon}$. By the
$1$-Lipschitz assumption (for the second equality below),
\[
\lbrack\psi_{r,t+u}(\alpha)]_{\varepsilon}=[\psi_{rt}(\psi_{t,t+u}%
(\alpha))]_{\varepsilon}=[\psi_{rt}(\beta)]_{\varepsilon}=[x_{r}%
,y_{r}]_{\varepsilon}\text{.}%
\]

This shows in particular that $T>r$, and the proof will now be complete if we
show that if $T<\infty$ then $J(T)$ is true. By assumption there is some
$r\leq s<T$ such that $\psi_{sT}$ is $(\varepsilon,\tau)$-refining for some
$\tau>0$. Since $t<T$ there is some $\tau$-chain $\beta$ in $X_{t}$ such that
$[\psi_{rt}(\beta)]_{\varepsilon}=[x_{r},y_{r}]_{\varepsilon}$. Now let
$\alpha$ be an arbitrarily fine chain in $X_{T}$ from $x_{T}$ to $y_{T}$ such
that $[\psi_{tT}(\alpha)]_{\varepsilon}=[\beta]_{\varepsilon}$ (Lemma
\ref{cref}). Since $\psi_{rt}$ is $1$-Lipschitz, $\psi_{rT}(\alpha)$ is also
an arbitrarily fine chain such that
\[
\lbrack\psi_{rT}(\alpha)]_{\varepsilon}=[\psi_{rt}(\psi_{tT}(\alpha
))]_{\varepsilon}=[\psi_{rt}(\beta)]_{\varepsilon}=[x_{r},y_{r}]_{\varepsilon
}\text{.}%
\]

\end{proof}

The above proposition gives a general approach to show that the inverse limit
of weakly chained spaces is weakly chained. We state one theorem that applies
when the bonding maps are $1$-Lipschitz (used in the current paper) and
sub-Euclidean (used in \cite{PCAT}).

\begin{theorem}
\label{invlim}Let $\left\{  X_{r},\psi_{rs}\right\}  _{r,s\in\Lambda}$ be an
inverse system of chain connected metric spaces with surjective $1$-Lipschitz
(resp. sub-Euclidean) bonding maps, where $\Lambda$ is a closed, unbounded
subset of $\mathbb{R}^{+}$. Suppose that for every $\varepsilon>0$ there
exists some $R>0$ (resp. there exist some $R,K>0$) such that for all $t>r\geq
R$ there exist $s,s^{\prime}$ such that $r\leq s<t<s^{\prime}$ and both
$\psi_{st}$ and $\psi_{ts^{\prime}}$ are $\varepsilon$-refining (resp.
$K$-refining). Then $X=\underleftarrow{\lim}X_{r}$ is weakly chained.
\end{theorem}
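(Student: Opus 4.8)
The plan is to verify separately the two requirements for $X=\underleftarrow{\lim}X_{r}$ to be weakly chained: chain connectedness, and a basis of weakly chained entourages. Chain connectedness is routine: given $\widehat{x},\widehat{y}$ and a basis entourage $E_{r,\varepsilon}$ (Lemma \ref{basis}), I would join $x_{r},y_{r}$ by an $\varepsilon$-chain in the weakly chained space $X_{r}$ and lift it to an $E_{r,\varepsilon}$-chain from $\widehat{x}$ to $\widehat{y}$ using surjectivity of $\psi_{r}$. For the main requirement I would fix $E_{r,\varepsilon}$ and produce $s\ge r$, $\delta>0$ making $E_{r,\varepsilon}$ weakly $E_{s,\delta}$-chained. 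The first reduction uses Lemma \ref{homlift}: if $\widehat{\alpha}$ is a chain from $\widehat{x}$ to $\widehat{y}$ whose level-$r$ projection satisfies $[\alpha_{r}]_{\varepsilon}=[x_{r},y_{r}]_{\varepsilon}$ in $X_{r}$, then lifting that $\varepsilon$-homotopy with specified endpoints yields $[\widehat{\alpha}]_{E_{r,\varepsilon}}=[\widehat{x},\widehat{y}]_{E_{r,\varepsilon}}$. The second reduction is that such a fine chain may be built high up and pushed down: if $t'\ge t$ and $\gamma'$ is a $\mu$-chain in $X_{t'}$ from $x_{t'}$ to $y_{t'}$ with $[\psi_{rt'}(\gamma')]_{\varepsilon}=[x_{r},y_{r}]_{\varepsilon}$, then any lift of $\gamma'$ with endpoints $\widehat{x},\widehat{y}$ is an $E_{t',\mu}\subset E_{t,\mu}$-chain whose level-$r$ projection is $\psi_{rt'}(\gamma')$. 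Since the $E_{t,\mu}$ are cofinal (Lemma \ref{basis}), the whole statement reduces to finding $s\ge r$ and a \emph{single} $\delta>0$ such that $\psi_{rt'}$ is $(\varepsilon,\delta)$-refining for cofinally many $t'\in\Lambda$ (the implication $d(x_{s},y_{s})<\delta\Rightarrow d(x_{r},y_{r})<\delta$ being automatic as $\psi_{rs}$ is $1$-Lipschitz).

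The engine is two composition facts, both read off from the chain characterization of refining maps together with the fact that $1$-Lipschitz maps carry $\varepsilon$-homotopies to $\varepsilon$-homotopies. \textbf{Composition up:} if $\psi_{rt}$ is $(\varepsilon,\delta)$-refining and $\psi_{tu}$ is $\varepsilon$-refining, then $\psi_{ru}$ is $(\varepsilon,\delta)$-refining — lift an $X_{r}$-chain into $X_{t}$ via $\psi_{rt}$, refine it enough to feed into $\psi_{tu}$, lift into $X_{u}$, and compose the two $\varepsilon$-homotopy controls downstairs; the bottom constant $\delta$ is preserved. \textbf{Descent of the top:} if $\psi_{rt}$ is $(\varepsilon,\delta)$-refining and $s\le t'\le t$, then $\psi_{rt'}$ is $(\varepsilon,\delta)$-refining — lift an $X_{r}$-chain all the way to $X_{t}$, then project to $X_{t'}$ by the $1$-Lipschitz map $\psi_{t't}$, which keeps it fine and leaves its level-$r$ image unchanged. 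I expect the reader to observe that the opposite move, \emph{raising} the foot of a refining map, is false (the control lives at the old foot), so it is Descent of the top — equivalently, down-closedness of the set of refining top-levels — that must be used.

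With $R:=\{t\in\Lambda:t\ge s,\ \psi_{rt}\text{ is }(\varepsilon,\delta)\text{-refining}\}$, the target is $\sup R=\infty$. For the base point I would apply the hypothesis to a pair $(r',r)$ with $r'<r$; its out-of-$r$ map gives $s>r$ with $\psi_{rs}$ $\varepsilon$-refining, fixing $s\in R$ and $\delta<\varepsilon$. By Descent of the top, $R$ is down-closed, so $R=\Lambda\cap[s,M)$ or $\Lambda\cap[s,M]$ with $M:=\sup R$; the out-of-$s$ map from the pair $(r,s)$, composed up with $\psi_{rs}$, shows $M>s$. Suppose for contradiction $M<\infty$. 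Since $\Lambda$ is closed, $M\in\Lambda$. Apply the hypothesis to the pair $(s,M)$: its into-$M$ map gives $\sigma\in\Lambda\cap[s,M)$ with $\psi_{\sigma M}$ $\varepsilon$-refining, and its out-of-$M$ map gives $\sigma'\in\Lambda$, $\sigma'>M$, with $\psi_{M\sigma'}$ $\varepsilon$-refining. Down-closedness gives $\sigma\in R$, so $\psi_{r\sigma}$ is $(\varepsilon,\delta)$-refining, and Composition up with $\psi_{\sigma M}$ yields $\psi_{rM}$ $(\varepsilon,\delta)$-refining, i.e. $M\in R$. Composition up of $\psi_{rM}$ with $\psi_{M\sigma'}$ then yields $\psi_{r\sigma'}$ $(\varepsilon,\delta)$-refining, so $\sigma'\in R$ with $\sigma'>M=\sup R$ — a contradiction. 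Hence $M=\infty$, $R=\Lambda\cap[s,\infty)$ is cofinal, and the reduction of the first paragraph finishes the theorem.

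The routine parts are the two reductions above and the chain-characterization bookkeeping behind the two composition facts. The main obstacle is the cofinality statement of the third paragraph: the hypothesis supplies refining maps into and out of each level with no lower bound on how far they reach, so a priori the refining top-levels could accumulate at a finite $M$. The resolution is the interplay of down-closedness of $R$ (from Descent of the top), closedness of $\Lambda$ (to place $M$ in $\Lambda$), and the into-$M$ map $\psi_{\sigma M}$ whose foot $\sigma$ is then automatically in $R$, which together let the out-of-$M$ map $\psi_{M\sigma'}$ break past any putative finite supremum.
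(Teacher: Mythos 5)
Your proof is correct and follows essentially the same route as the paper's: reduce via the homotopy-lifting lemma (Lemma \ref{homlift}) to producing arbitrarily fine chains at cofinally many levels $t$ whose level-$r$ image is $\varepsilon$-homotopic to $[x_{r},y_{r}]_{\varepsilon}$, and then run a supremum argument along $\Lambda$ driven by the two-sided refining hypothesis and the closedness of $\Lambda$. The only differences are organizational: you induct on the uniform statement that $\psi_{rt}$ is $(\varepsilon,\delta)$-refining rather than on the paper's pointwise statement $J(t)$ for a fixed pair $(\widehat{x},\widehat{y})$ (so your base case comes from an out-of-$r$ refining map supplied by the hypothesis instead of from $X_{r}$ being weakly chained), and you isolate explicitly the down-closedness of the set of good levels, which the paper uses only implicitly in verifying $J(T)$.
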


\begin{proof}
Since each $X_{r}$ is chain connected and the bonding maps are surjective, the
fact that $X$ is chain connected follows from Lemma 11 in \cite{BPUU}. Now for
any $r>R$, by the hypothesis of the theorem and Remark \ref{ref},
$E_{\varepsilon}$ (resp. $E_{K}$) is weakly chained in $X_{r}$. Now it follows
from Proposition \ref{invpro} that the basis element $E_{r,\varepsilon}$
(resp. $E_{r,K}$) is weakly chained, completing the proof.
\end{proof}

\begin{proof}
[Proof of Theorem \ref{CAT}]For more background CAT(0) spaces, see \cite{BH},
Chapter II.8. Let $X$ be a CAT(0) space and $x_{0}\in X$; denote the (unique)
geodesic from $x_{0}$ to $x$ by $\gamma_{x}$. For $r\leq s$, one has the
projections $p_{rs}:\overline{B}(x_{0},s)\rightarrow\overline{B}(x_{0},r)$
between closed metric balls defined as follows: If $d(x_{0},x)>r$ then
$p_{rs}(x)=\gamma_{x}(r)$; otherwise (i.e. $x\in\overline{B}(x_{0},r)$),
$p_{rs}(x)=x$. These projections are the bonding maps for an inverse system,
the inverse limit of which is denoted by $\overline{X}$. By definition,
elements of $\overline{X}$ are contained in $%
{\displaystyle\prod\limits_{r\in\mathbb{R}^{+}}}
\overline{B}(x_{0},r)$ and denoted by $\widehat{x}=(x_{r})$. There is a
topological embedding of $X$ into $\overline{X}$ defined as follows
$\iota(x)=\widehat{x}$, where $x_{r}=x$ whenever $r\geq d(x_{0},x)$. The
boundary $\partial X$ at $x_{0}$ (which is topologically independent of
$x_{0}$) is defined to be $\overline{X}\backslash\iota(X)$.
\end{proof}

Now consider the restrictions $\psi_{rs}$ of $p_{rs}$ to $\Sigma_{x_{0}}(s)$.
We again have an inverse system $(\Sigma_{r}(x_{0}),\psi_{rs})$, and by the
CAT(0) condition, $\psi_{rs}$ is sub-Euclidean. We claim that
$B:=\underleftarrow{\lim}\Sigma_{r}(x_{0})=\partial X$. First note that since
each $\Sigma_{x_{0}}(r)$ is contained in $\overline{B}(x_{0},r)$,
$B\subset\overline{X}$. Moreover, since elements of $B$ do not have constant
coordinates, $B\subset\partial X$. Therefore we need only show the opposite
inclusion. There is a natural bijection between elements of $\partial X$ and
unit parameterized geodesic rays starting at $x_{0}$, which takes a geodesic
ray $\gamma$ to the element of $\widehat{x}\in\partial X$ with $x_{r}%
=\gamma(r)$. But $x_{r}\in\Sigma_{r}(x_{0})$ by definition, showing that
$\partial X\subset B$. Note that in the geodesically complete case, the
projections $\psi_{rs}$ are surjective onto $\Sigma_{r}(x_{0})$. Therefore the
assumptions in the first sentence of Theorem \ref{invlim} are satisfied for
the inverse system $(\Sigma_{x_{0}}(r),\psi_{rs})_{r\in\mathbb{R}}$.

Now let $R$ be such that if $t\geq R$ then the remaining hypotheses of the
theorem hold. Suppose that $t>r\geq R$, $0<t-s\leq\frac{\min\{\iota
(t),t-r\}}{2}$, and $y\in\Sigma_{x_{0}}(s)$. Let $x,x^{\prime}\in\left(
\psi_{st}\right)  ^{-1}(y)$. By definition of the projection and the triangle
inequality, $d(x,x^{\prime})\leq\iota(t)$ and by assumption, if $(x,x^{\prime
})$ is not a sink then there is a curve from $x$ to $x^{\prime}$ in
$\Sigma_{x_{0}}(t)\cap B(x,K)\cap B(x^{\prime},K)$. But then arbitrarily fine
chains on this curve are $K$-homotopic to $\{x^{\prime},x\}$ and hence by
Proposition \ref{compact} $\psi_{st}$ is $K$-refining. On the other hand, for
$s^{\prime}>t$ close enough to $t$, $s^{\prime}-t<\frac{\iota(s^{\prime})}{2}$
and the same argument as above finishes the proof.

\begin{example}
We will revisit the solenoid $\Sigma$ discussed earlier, to see why the
hypotheses of Theorem \ref{invlim} fail in this case--as they must since
$\Sigma$ is not weakly chained. Considering $\Sigma$ as the inverse limit of
circles $C_{i}$, we may give the circles their standard Riemannian metrics,
with the diameter of the $i^{th}$ circle equal to $2^{i}$; that is, the double
covers $\phi_{i,i+1}:C_{i+1}\rightarrow C_{i}$ are local isometries and hence
$1$-Lipschitz. However, the double covers are not $\varepsilon$-refining for
small enough $\varepsilon$. To simplify the notation, simply consider the
double cover $\phi:C^{\prime}\rightarrow C$, where $C$ has circumference $1$
and $C^{\prime}$ has circumference $2$. For points $x_{1},x_{2}\in C$, let
$x_{1}^{\prime}$ be one of the two points in $\phi^{-1}(x_{1})$ and
$x_{2}^{\prime}$ be the point in $\phi^{-1}(x_{2})$ closest to the antipodal
point of $x_{1}^{\prime}$. If $d(x_{1},x_{2})<\varepsilon=\frac{1}{3}$ then
$d(x_{1}^{\prime},x_{2}^{\prime})>\frac{2}{3}$. If $\alpha$ is an arbitrarily
fine chain between $x_{1}^{\prime}$ and $x_{2}^{\prime}$, its image must wrap
more than $2/3$ of the way around the circle, and therefore cannot be
$\varepsilon$-homotopic to its endpoints. It is not hard to see that this must
be true for small enough $\varepsilon>0$--a basic move cannot
\textquotedblleft cross\textquotedblright\ the circle. The number $\frac{1}%
{3}$ more precisely is the single \textquotedblleft homotopy critical
value\textquotedblright\ of $C$ (see \cite{PW1}). This is, $\frac{1}{3}$
precisely the largest $\varepsilon$ at which the $\varepsilon$-cover of $C$
\textquotedblleft unrolls\textquotedblright\ into a line.
\end{example}

\section{Remaining properties of $\phi:\protect\widetilde{X}\rightarrow X$}

In \cite{BPUU}, the first step to fully understand $\phi:\widetilde{X}%
\rightarrow X$ in the coverable case was to prove $\widetilde{X}$ is
\textquotedblleft universal\textquotedblright\ which is equivalent to
$\widetilde{X}$ being coverable with $\pi_{U}(X)$ trivial (Corollary 52,
\cite{BPUU}). However, the argument depends heavily on the fact that $X$ is
coverable, and we see no way to modify it to the weakly chained case (and
indeed we do not know whether $\widetilde{X}$ is coverable when $X$ is weakly
chained). Therefore we follow a different strategy that uses Theorem
\ref{invlim}.

\begin{proposition}
\label{ueq}If $X$ is a weakly chained metrizable uniform space then the
following are equivalent:

\begin{enumerate}
\item $X$ is uniformly simply connected.

\item For every entourage $E$, $\pi_{E}^{c}(X)$ is trivial (equivalently
$\phi_{E}^{c}:X_{E}^{c}\rightarrow X$ is a uniform homeomorphism).

\item $\phi:\widetilde{X}\rightarrow X$ is a uniform homeomophism.
\end{enumerate}
\end{proposition}

\begin{proof}
That the first and third parts are equivalent follows from Theorem
\ref{prelim}: $\phi$ is a quotient map via the group $\pi_{U}(X)$. If $\pi
_{U}(X)$ is trivial then again by Theorem \ref{prelim}, every $\theta^{E}%
:\pi_{U}(X)\rightarrow\pi_{E}^{c}(X)$ is surjective, so $\pi_{E}^{c}(X)$ is
trivial. Conversely, if every $\pi_{E}^{c}(X)$ is trivial then $\pi
_{U}(X)=\underleftarrow{\lim}\pi_{E}^{c}(X)$ is trivial.
\end{proof}

\begin{theorem}
If $X$ is a weakly chained metrizable uniform space then $\widetilde{X}$ is
uniformly simply connected.
\end{theorem}

\begin{proof}
To see that $\widetilde{X}$ is weakly chained, we will use Theorem
\ref{invlim} and consider $\widetilde{X}$ as $\underleftarrow{\lim}X_{r}$,
where $X_{r}:=X_{E_{r}}^{c}$. Given any metric on $X$, each $X_{\varepsilon
}:=X_{E_{r}}$ has a natural \textquotedblleft lifted metric\textquotedblright%
\ (\cite{PW1}, Definition 12, Proposition 13-14), with $d([\alpha]_{r}%
,[\beta]_{r})$ defined to be the infimum of the lengths $L(\kappa)$ of
$r$-chains $\kappa$ with $[\kappa]_{r}=[\alpha\ast\overline{\beta}]_{r}$,
where
\begin{equation}
L(\{x_{0},...,x_{n}\})=\sum_{i=1}^{n}d(x_{i},x_{i-1})\geq d(x_{0}%
,x_{n})\text{.} \label{met}%
\end{equation}
This metric is an isometry on $\frac{r}{2}$-balls and is invariant with
respect to the action of $\pi_{r}(X)$. While not stated in \cite{PW1}, it
follows from Inequality (\ref{met}) and Diagram (\ref{iota}) that $\phi
_{rs}:X_{s}\rightarrow X_{r}$ is $1$-Lipschitz. Now fix $\varepsilon>0$
consider any $r>0$. By Proposition \ref{onto}, for some $0<s<r$, $E_{r}$ is
weakly $E_{s}$-chained in $X$. Suppose that $d([\alpha]_{r},[\beta]_{r})<s$ in
$X_{r}$. Setting $x:=\phi_{r}([\alpha]_{r})$ and $y:=\phi_{r}([\beta]_{r})$,
since $s<r$, $d(x,y)=d([\alpha]_{r},[\beta]_{r})<s$. That is, there are
arbitrarily fine chains $\gamma$ from $x$ to $y$ such that $[\gamma
]_{r}=[x,y]_{r}$. By uniqueness in the Chain Lifting Lemma and the fact that
$\phi_{r}$ is a bijection from $s$-balls to $s$-balls, the lift $\gamma^{\ast
}$ of $\gamma$ is an arbitrarily fine chain from $[\alpha]_{r}$ to
$[\beta]_{r}$ such that $[\gamma^{\ast}]_{r}=[[\alpha]_{r},[\beta]_{r}]_{r}$.
Now suppose that $a\in\left(  \phi_{rs}^{c}\right)  ^{-1}([\alpha]_{r})$ and
$b\in\left(  \phi_{rs}^{c}\right)  ^{-1}([\beta]_{r})$. Since the restriction
of $\phi_{rs}^{c}$ is a bijection on $s$-balls there is some $b^{\prime}%
\in\phi_{rs}^{-1}([\beta]_{r})\cap B(a,s)$. By the Chain Lifting Lemma,
$\gamma^{\ast}$ lifts to an arbitrarily fine chain $\gamma^{\ast\ast}$ from
$a$ to $b^{\prime}$. Since $b,b^{\prime}\in X_{s}^{c}$ there are arbitrarily
fine chains $\gamma^{\ast\ast\ast}$ from $b$ to $b^{\prime}$ in $X_{s}^{c}$.
But $\phi_{rs}(\gamma^{\ast\ast\ast})$ is an arbitrarily fine loop in $X_{r}$
and hence is $r$-null by Corollary \ref{enull}. That is, $\phi_{rs}%
(\gamma^{\ast\ast}\ast\gamma^{\ast\ast\ast})$ is $r$-homotopic to
$\{[\alpha]_{r},[\beta]_{r}\}$. We have shown that if $s<r$ is positive and
sufficiently small, $\phi_{rs}^{c}$ is $\varepsilon$-refining. Now suppose
that $s<r$ is arbitrary and positive. By what we just proved, for some
sufficiently small positive $t<r$, $\phi_{rt}^{c}:X_{t}\rightarrow X_{r}$ is
$\varepsilon$-refining. It now follows from Corollary \ref{compo} that
$\phi_{rs}^{c}$ is also $\varepsilon$-refining. That is, whenever $0<r<s$,
$\phi_{rs}^{c}$ is $\varepsilon$-refining. Theorem \ref{invlim} now applies
(strictly speaking we need to reverse the indexing), showing that
$\widetilde{X}$ is weakly chained.

To finish the proof of the theorem, note that the set of all $G:=\left(
\phi^{E}\right)  ^{-1}(F^{\ast})=\left(  \phi^{E}\right)  ^{-1}(F^{\ast})$,
where $E$ is weakly $F$-chained, is a basis for $\widetilde{X}$. But a word of
caution: the terminology $F^{c}$ does not indicate in which space $X_{D}^{c}$
the entourage lives, but we will always mention the space in what follows. We
first claim that if $(x,y)\in G$ then there is some $z\in\widetilde{X}$ such
that $\{x,z,y\}$ is a $G$-chain and $\phi^{F}(\{x,y,z\})$ is an $F^{c}$-chain
in $X_{F}$. By definition of $G$, $\left(  \phi^{E}(x),\phi^{E}(y)\right)  \in
F^{c}$ in $X_{E}$. Since $\phi_{EF}^{c}$ is a bijection on $F^{c}$-balls,
there exists some $z_{F}\in B(\phi^{F}(x),F^{c})$ in $X_{F}^{c}$ such that
$\phi_{EF}(z_{F})=\phi^{E}(y)$. Let $z$ be any lift of $z_{F}$ to $G$. Since
$\phi^{E}(z)=\phi_{EF}(z_{F})=\phi^{E}(y)$ then $(\phi^{E}(z),\phi
^{E}(y))=(\phi^{E}(y),\phi^{E}(y))$ is trivially in $F^{c}$ and so $(y,z)\in
G$. Likewise, $(\phi^{E}(z),\phi^{E}(x))=(\phi^{E}(x),\phi^{E}(y))\in F^{c}$,
so $(x,z)\in G$. Now note that since $(x,y)\in G$, $[x,z,y]_{G}=[x,y]_{G}$. By
iteration we obtain the following: If $\alpha$ is any $G$-chain in
$\widetilde{X}$ then there is a $G$-chain $\beta$ in $\widetilde{X}$ such that
$[\alpha]_{G}=[\beta]_{G}$ and $\phi^{F}(\beta)$ is an $F^{c}$-chain in
$X_{F}$. In particular, we may always assume that if $[\lambda]_{G}\in\pi
_{G}(\widetilde{X})$ then $\phi^{F}(\lambda)$ is an $F^{c}$-loop in $X_{F}$
and hence $F^{\ast}$-null by Corollary \ref{enull}.

Identifying $X_{F}$ with $\left(  X_{E}\right)  _{F^{\ast}}$ via Diagram
\ref{iota} and note that the entourage $(F^{\ast})^{\ast}$ in $\left(
X_{E}\right)  _{F^{\ast}}$ corresponds to the entourage $F^{\ast}$ in $X_{F}$.
Therefore the $F^{\ast}$-null loop $\phi^{F}(\lambda)$ corrresponds to an
$(F^{\ast})^{\ast}$ loop $\lambda^{\prime}$ in $(X_{E})_{F^{\ast}}$. By
Equation (\ref{surj2}), $\phi^{E}(\lambda)=\phi_{EF}(\phi^{F}(\lambda
))=\phi_{F^{\ast}}(\lambda^{\prime})$ is $F^{c}$ null in $X_{E}$. Now any
$F^{c}$-null homotopy of $\phi^{E}(\lambda)$ lifts to a $G$-null homotopy of
$\lambda$. That is, $\pi_{G}(\widetilde{X})$ is trivial and the proof that
$\pi_{U}(\widetilde{X})$ is trivial is complete by Proposition \ref{ueq}.
\end{proof}

The next theorem is a stronger version of the Lifting property than was stated
in the Introduction, in that the target space need not be weakly chained.

\begin{theorem}
[Strong Lifting]Let $X$ be uniformly simply connected and $Y$ be any uniform
space. For every uniformly continuous function $f:X\rightarrow Y$ there is a
unique basepoint-preserving uniformly continuous function $f_{L}%
:X\rightarrow\widetilde{Y}$ such that $f=\phi\circ f_{L}$.
\end{theorem}

\begin{proof}
Let $E$ be an entourage in $Y$. Since $f$ is uniformly continuous there is an
entourage $F$ in $X$ such that $f(F)\subset E$ and therefore the induced
mapping $f_{EF}:X_{F}\rightarrow Y_{E}$ is defined and uniformly continuous,
and is the unique base-point preserving map such that $\phi_{Y}\circ
f_{EF}=f\circ\phi_{X}$ (Theorem 30, \cite{BPUU}). By Proposition \ref{ueq},
$\phi_{F}^{c}:X_{F}^{c}\rightarrow X$ is a uniform homeomorphism and we may
define $f_{L}^{E}:=f_{EF}\circ\left(  \phi_{F}^{c}\right)  ^{-1}$. Since
$\phi_{Y}\circ f_{EF}=f\circ\phi_{X}$, $f_{L}^{E}:X\rightarrow Y_{E}$ is a
lift of $f$. Note that if $D\subset E$ is an entourage in $Y$, then by
definition, $f_{L}^{E}=\phi_{ED}\circ f_{L}^{D}$ and therefore by the
universal property of inverse limits there is a unique uniformly continuous
lift $f_{L}:X\rightarrow\widetilde{Y}$ defined by $\phi^{E}(f_{L}%
(y))=f_{L}^{E}(y)$.
\end{proof}

Having established the the \textbf{Lifting }property and the fact that
$\widetilde{X}$ is uniformly simply connected, the proofs of the
\textbf{Universal} and \textbf{Functoria}l properties are essentially the same
as those of Theorems 2 and 3, respectively, in \cite{BPUU}.

\begin{example}
\label{TC}The Texas Circle $T$, defined by Labuz (\cite{La}), consists of the
graph of $f(x)=sin2x+1/x$ for $x\in\lbrack\pi,\infty)$, the same interval of
the $x$-axis, and the vertical segment from $(\pi,0)$ to $(\pi,\frac{1}{\pi}%
)$. As was pointed out in \cite{La} and is easy to check, this space is not
weakly chained with the subspace uniformity. However, the fine uniformity is
simply the uniform structure of the Euclidean metric on the line, which of
course is weakly chained. This example shows that it is possible for a
metrizable weakly chained topological space to have a uniformity that is not
weakly chained. It remains an open question whether a path connected
metrizable space with the fine uniformity must be a weakly chained uniform
space--a positive answer to which would provide a more straightforward path to
the UU-cover of a path connected metric space than the one described in the introduction.
\end{example}

\begin{proposition}
\label{loc2g}If $X$ is a locally chain connected (resp. locally connected,
locally path connected) topological space then with the fine uniformity then
$X$ is uniformly locally chain connected (resp. uniformly locally connected,
uniformly path connected) in the sense that there is a basis for the
uniformity such that every entourage $E$ in the basis has chain connected
(resp. connected, path connected) balls. In particular, if $X$ is chain
connected then and satisfies any of these local conditions then with the fine
uniformity it is coverable.
\end{proposition}

\begin{proof}
The arguments for all three cases are essentially the same so we only consider
the chain connected case. Let $F$ be an entourage and $G$ be an entourage such
that $G^{2}\subset F$. Then for every $x\in X$ there is some chain connected
open set $U_{x}$ such that $x\in U_{x}\subset B(x,G)$. Define $E$ to be the
union of all sets $U_{x}\times U_{x}\subset X\times X$. If $(x,y)\in E$ then
$(x,y)$ lies in some $U_{z}\times U_{z}$, meaning that $x\in B(G,z)$ and $y\in
B(G,z)$, so $(x,y)\in G^{2}\subset F$. So $E\subset F$ and since $F$ was
arbitrary, the set of all such $E$ is a basis. The same argument shows that
$B(x,E)$ is the union of all $U_{z}$ such that $x\in U_{z}$. That is, $B(x,E)$
is a union of chain connected sets that all include $x$, and hence is chain
connected. For the last statement, it follows from Proposition 71,
\cite{BPUU}, that in all three cases every such $X_{E}$ is chain connected and
hence $X_{E}=X_{E}^{c}$, which is trivially uniformly open in $X_{E}$.
Therefore, if $X$ is weakly chained then $E$ is a weakly chained entourage by
Theorem \ref{ucon}.
\end{proof}

\begin{acknowledgement}
I appreciate useful conversations I with Jeremy Brazas, Brendon LaBuz, Mike
Mihalik, and Kim Ruane. This paper is dedicated to Betsy Saylor-Plaut.
\end{acknowledgement}

\end{document}